\def\theequation{\thesection.\@arabic \c@equation}
\def\theenumi{\@roman\c@enumi}
\def\@citecolor{blue}
\def\@linkcolor{blue}
\def\@urlcolor{blue}
\newtheorem{theorem}[equation]{Theorem}
\newtheorem{lemma}[equation]{Lemma}
\newtheorem{proposition}[equation]{Proposition}
\newtheorem{conjecture}[equation]{Conjecture}
\newtheorem{claim*}{Claim}
\newtheorem{question}[equation]{Question}
\theoremstyle{definition}
\newtheorem{rmk}[equation]{Remark}
\newenvironment{remark}[1][]{%
    \begin{rmk}[#1] \pushQED{\qed}}{\popQED \end{rmk}}
\newtheorem{eg}[equation]{Example}
\newenvironment{example}[1][]{%
    \begin{eg}[#1] \pushQED{\qed}}{\popQED \end{eg}}
\newtheorem{definition}[equation]{Definition}
\newtheorem{notn}[equation]{Notation}
\def\<{\langle}
\def\>{\rangle}
\newcommand{\BQ}{\mathrm{B}_{\QQ}}
\newcommand{\BQtot}{\overline{\mathrm{B}_{\QQ}(R_{\infty})}}
\newcommand{\rank}{\operatorname{rank}}
\renewcommand{\to}{\longrightarrow}
\newcommand{\fm}{\mathfrak m}
\newcommand{\QQ}{\mathbb{Q}}
\newcommand{\VV}{\mathbb{V}}
\newcommand{\WW}{\mathbb{W}}
\newcommand{\ZZ}{\mathbb{Z}}
\newcommand{\edim}{\operatorname{edim}}
\newcommand{\rhofree}{\rho_{-1}}
\newcommand{\defi}[1]{{\bfseries\upshape #1}}
\newcommand{\minus}{\ensuremath{\!\smallsetminus\!}}
\title{Shapes of free resolutions over a local ring}
\begin{document}

\author[C. Berkesch]{Christine Berkesch}
\address{Institut Mittag-Leffler \\ Aurav\"agen 17 \\ 
SE-182 60 Djursholm, Sweden \hfill \quad \linebreak\vspace{-.7cm}}
\address{
Department of Mathematics \\ Stockholm University \\
SE-106 91 Stockholm, Sweden}
\curraddr{Department of Mathematics, Duke University, Box 90320, Durham, NC 27708. }
\email{cberkesc@math.duke.edu}

\author[D. Erman]{Daniel Erman}
\address{Department of Mathematics \\ Stanford University \\
Stanford, CA 94305}
\curraddr{Department of Mathematics, University of Michigan, Ann Arbor, MI
48109.}
\email{erman@umich.edu}

\author[M. Kummini]{Manoj Kummini}
\address{Department of Mathematics \\ Purdue University \\
West Lafayette, IN 47907}
\curraddr{Chennai Mathematical Institute, Siruseri, Tamilnadu 603103, India. }
\email{mkummini@cmi.ac.in}

\author[S. Sam]{Steven V Sam}
\address{Department of Mathematics \\ Massachusetts Institute of
Technology \\
Cambridge, MA 02139}
\email{ssam@math.mit.edu}

\thanks{CB was supported by NSF Grant OISE 0964985, DE by an NDSEG
  fellowship and NSF Award No.~1003997, and SVS by an NSF graduate
  research fellowship and an NDSEG fellowship. }

\subjclass[2010]{Primary: 13D02, Secondary: 13C05, 13H05}

\begin{abstract}
We classify the possible shapes of minimal free 
resolutions over a regular local ring.  This illustrates the existence of free 
resolutions whose Betti numbers behave in surprisingly pathological ways.  
We also give an asymptotic characterization of the
possible shapes of minimal free resolutions over hypersurface rings.
Our key new technique uses asymptotic arguments to study formal 
$\QQ$-Betti sequences.
\end{abstract}

\maketitle

\section{Introduction}
\label{sec:intro}

Let $M$ be a finitely generated module over a local ring $R$. 
From its minimal free resolution
\[
0 \gets M \gets R^{ b_0}\gets R^{ b_1}\gets R^{ b_2}\gets \cdots
\]
we obtain the \defi{Betti sequence} $b^R(M):=(b_0,b_1,b_2, \dots)$ of
$M$.  Questions about the possible behavior of $b^R(M)$ arise in many
different contexts (see \cite{peeva-stillman} for a recent survey).
For instance, the Buchsbaum--Eisenbud--Horrocks Rank Conjecture
proposes lower bounds for each $b^R_i(M)$, at least when $R$ is
regular, and this conjecture is related to multiplicative structures
on resolutions~\cite{buchs-eis-gor}*{p.\ 453}, vector bundles on
punctured discs~\cite{hartshorne-vector}*{Problem~24}, and equivariant
cohomology of products of spheres (\cite{carlsson2}
and~\cite{carlsson}*{Conj~II.8}). When $R$ is not regular, there are even
more
questions about the possible behavior of
$b^R(M)$~\cite{avramov-notes}*{\S4}.

Here we consider the qualitative behavior of these sequences; we
define the \defi{shape} of the free resolution of $M$ as the Betti
sequence $b^R(M)$ viewed \emph{up to scalar multiple}. Instead of
asking if there exists a module $M$ with a given Betti sequence,
say $\mathbf{v}=(18,20,4,4,20,18)$, we ask if there exists a
Betti sequence $b^R(M)$ with the same shape as $\mathbf{v}$, i.e.,
whether $b^R(M)$ is a scalar multiple of $\mathbf{v}$.  In a sense,
this approach is orthogonal to questions like the
Buchsbaum--Eisenbud--Horrocks Rank Conjecture,
which focus on the {\em size} of a free resolution.

In this article, we show that this shift in approach, which was
motivated by ideas of~\cite{boij-sod1}, provides a clarifying
viewpoint on Betti sequences over local rings. First, we completely
classify shapes of resolutions when $R$ is regular.  To state the
result, we let $\mathbb V= \QQ^{n+1}$ be a vector space with standard basis
$\{\epsilon_i\}_{i=0}^n$.

\begin{theorem}\label{thm:regmain}
Let $R$ be an $n$-dimensional regular local ring, 
$\mathbf{v} := (v_i)_{i=0}^n \in \mathbb V$, and $0 \leq d \leq n$. 
Then the following are equivalent:
\begin{enumerate}[\rm (i)]
\item\label{thm:regmain:mod} There exists a finitely generated
  $R$-module $M$ of depth $d$ such that $b^R(M)$ has shape
  $\mathbf{v}$, i.e., there exists $\lambda\in \QQ_{>0}$ such that
  $b^R(M)=\lambda \mathbf{v}$.
\item\label{thm:regmain:sum}  
	There exist $a_{-1}\in\mathbb Q_{\geq 0}$ and $a_i\in \mathbb Q_{>0}$ for $i\in\{0, \dots, n-d-1\}$ such that
	\[
    \mathbf{v} = a_{-1}\epsilon_0 + 
    \sum_{i=0}^{n-d-1} a_i(\epsilon_i+\epsilon_{i+1}).
	\]
\end{enumerate}
If $a_{-1}=0$ in \eqref{thm:regmain:sum}, 
then $M$ can also be chosen to be Cohen--Macaulay.
\end{theorem}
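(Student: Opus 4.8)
The plan is to prove the two implications separately, using constructions that keep the depth (or the Cohen--Macaulay property) under control.

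For \eqref{thm:regmain:mod}$\Rightarrow$\eqref{thm:regmain:sum}, I would first reduce to the case where $R$ is a power series ring $k[[x_1,\dots,x_n]]$, since Betti numbers of finitely generated modules are unaffected by faithfully flat base change (completion, residue field extension). Given $M$ of depth $d$, the Auslander--Buchsbaum formula gives $\projdim_R M = n-d$, so $b^R(M)=(b_0,\dots,b_{n-d})$ with $b_{n-d}\neq 0$ and all $b_i>0$. The content of \eqref{thm:regmain:sum} is then the numerical assertion that any such strictly positive sequence of length $n-d+1$ can be written as $a_{-1}\epsilon_0+\sum_{i=0}^{n-d-1}a_i(\epsilon_i+\epsilon_{i+1})$ with $a_{-1}\ge 0$ and $a_i>0$. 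This is a linear-algebra fact about the "staircase" cone generated by $\epsilon_0,\epsilon_0+\epsilon_1,\epsilon_1+\epsilon_2,\dots,\epsilon_{n-d-1}+\epsilon_{n-d}$: I would solve for the $a_i$ recursively from the top, setting $a_{n-d-1}=b_{n-d}$, then $a_{n-d-2}=b_{n-d-1}-a_{n-d-1}$, and so on, finally reading off $a_{-1}=b_0-a_0$. The only thing to check is positivity. Positivity of $b_{n-d}$ handles $a_{n-d-1}$; for the intermediate $a_i$ I expect to invoke a known rigidity/strict-inequality property of Betti numbers over regular rings --- that consecutive Betti numbers of a module over a regular local ring satisfy $b_i< b_{i-1}+b_{i+1}$ is false in general, so instead I would use that one can always peel off a free summand or argue via the fact that a minimal resolution has no "gaps," together with the assumption $v_i>0$ being part of membership in the open cone. (In fact the cleaner route: membership of $\mathbf v$ in this cone is equivalent to the partial sums condition $\sum_{i\le j}(-1)^{j-i}v_i>0$ for $j<n-d$ and $=$ something $\ge 0$ at the end; one verifies a positive Betti sequence over a regular ring automatically satisfies these.)

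For \eqref{thm:regmain:sum}$\Rightarrow$\eqref{thm:regmain:mod}, the strategy is to realize each generator of the cone by an explicit module and then take direct sums, since $b^R(M\oplus N)=b^R(M)+b^R(N)$ and the direct sum of a depth-$d$ module with anything of depth $\ge d$ has depth $d$. The building block for $\epsilon_i+\epsilon_{i+1}$ is the cokernel of a generic $(something)$ presentation whose resolution is a truncated Koszul-type complex concentrated in homological degrees $i$ and $i+1$: concretely, take a short resolution $R^{a_i}\xleftarrow{\phi} R^{a_i}$ with $\phi$ a sufficiently general matrix of entries in $\fm$ whose cokernel has projective dimension exactly $1$ and Betti sequence $a_i(1,1)$, then shift it into homological position $i$ by replacing the module with an $i$-th syzygy of a suitable auxiliary module --- or more simply, by tensoring with a length-$i$ Koszul complex on a regular sequence and splitting off the appropriate piece. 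One must arrange the depth: a module with resolution of length $n-d$ has depth $d$ by Auslander--Buchsbaum, so the summand contributing $\epsilon_{n-d-1}+\epsilon_{n-d}$ already pins the depth to $d$, while the lower summands (and the $a_{-1}\epsilon_0$ term, realized by a free module $R^{a_{-1}}$) have depth $\ge d$. If $a_{-1}=0$, every summand is a maximal Cohen--Macaulay-free... rather, I need each summand to be Cohen--Macaulay of depth $d$: this is where I would instead use the modules constructed in Boij--Söderberg-style "pure-resolution" arguments or the cokernels of generic matrices from \cite{boij-sod1}, which are Cohen--Macaulay, and note that a direct sum of Cohen--Macaulay modules all of the same dimension is Cohen--Macaulay.

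The main obstacle I anticipate is the realization step: constructing, over an arbitrary regular local ring (not just a polynomial ring, and not assuming infinite residue field a priori), a finitely generated module whose minimal free resolution is supported exactly in two consecutive homological degrees $i,i+1$ with prescribed rank $a_i$, \emph{and} with the correct depth. Over a polynomial ring this is classical (generic matrices, or pullbacks of Eagon--Northcott type complexes), but transporting it to the local setting requires either the completion/generic-extension reduction from the first paragraph or a direct construction using a regular system of parameters $x_1,\dots,x_n$; threading the Cohen--Macaulay refinement through this is the delicate point, and I would handle it by first building a Cohen--Macaulay module of codimension $n-d$ with two-step resolution $a_{n-d-1}(\epsilon_{n-d-1}+\epsilon_{n-d})$ and then, for the lower terms, taking high syzygies of Cohen--Macaulay modules, using that over a regular ring a sufficiently high syzygy of a Cohen--Macaulay module is again Cohen--Macaulay of the appropriate depth.
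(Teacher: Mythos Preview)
Your proposal for \eqref{thm:regmain:sum}$\Rightarrow$\eqref{thm:regmain:mod} has a genuine gap at its core: the rays $\rho_i=\epsilon_i+\epsilon_{i+1}$ with $i>0$ are \emph{not} Betti sequences of any nonzero module, nor scalar multiples of one, because every nonzero finitely generated module has $b_0=\mu(M)>0$. So there is no ``building block'' module whose minimal resolution is ``concentrated in homological degrees $i$ and $i+1$,'' and no amount of taking syzygies, tensoring with Koszul complexes, or splitting off summands will produce one---all of these operations yield modules, and modules have $b_0>0$. Your direct-sum strategy therefore cannot realize a vector like $\rho_2=(0,0,1,1,0,\dots)$, and hence cannot realize an arbitrary interior point of the cone by summing realizations of the extremal rays.

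The paper circumvents this by a limiting argument. For each $i$ it exhibits a sequence of Cohen--Macaulay modules of codimension $n-d$ (local analogues of Eisenbud--Schreyer pure resolutions, obtained by base-changing from $\ZZ[x_1,\dots,x_n]$ along any choice of regular system of parameters) whose normalized Betti sequences converge to $\rho_i$; the degree sequences are chosen so that the ``gap'' at position $i$ is small while all other gaps grow. None of these approximants equals $\rho_i$, but because $\mathbf v$ lies in the \emph{interior} of the cone spanned by $\rho_0,\dots,\rho_{n-d-1}$, once the approximants are close enough $\mathbf v$ sits in their positive $\QQ$-span, and a suitable direct sum (after clearing denominators) realizes $\lambda\mathbf v$ on the nose. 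The Cohen--Macaulay refinement comes for free since every approximant is already Cohen--Macaulay of the correct codimension.

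For \eqref{thm:regmain:mod}$\Rightarrow$\eqref{thm:regmain:sum} your instinct is right but the details are off: the relevant alternating sums run from the \emph{top}, not the bottom. Solving your recursion gives $a_i=\sum_{\ell\ge i+1}(-1)^{\ell-i-1}v_\ell=\chi_{[i+1,n]}(b^R(M))$, and this equals $\rank\Omega^{i+1}(M)$, which is strictly positive for $1\le i+1\le n-d$ (since $\Omega^{i+1}(M)$ is a nonzero submodule of a free module) and zero beyond. No reduction to a power series ring is needed.
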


This demonstrates that there are almost no bounds on the shape of 
a minimal free $R$-resolution. While showing 
that~\eqref{thm:regmain:mod} implies \eqref{thm:regmain:sum} is
straightforward, the converse is more interesting, as it leads
to examples of free resolutions with unexpected behavior.
For instance, let $R=\QQ[[x_1,\dots,x_{14}]]$, fix some $0 < \delta \ll 1$, 
and let $\mathbf{v}=(1-\frac{\delta}{2}, 1, \delta, \delta, \delta, \delta,
\delta, \delta, 4, 4, \delta, \delta, \delta, 1, 1-\frac{\delta}{2})$.
Plotting its entries, the shape of $\mathbf{v}$ is shown in Figure~\ref{fig:v}. 
As $\mathbf{v}$ satisfies
Theorem~\ref{thm:regmain}\eqref{thm:regmain:sum}, 
there exists a finite length $R$-module $M$ whose minimal free resolution 
has this shape. 
Similar pathological examples abound.
\begin{figure}
\vspace*{.65cm}
\begin{tikzpicture}
\filldraw (0,.63) circle (1pt);
\filldraw (.4,.7) circle (1pt);
\filldraw (.8,.1) circle (1pt);
\filldraw (1.2,.1) circle (1pt);
\filldraw (1.6,.1) circle (1pt);
\filldraw (2,.1) circle (1pt);
\filldraw (2.4,.1) circle (1pt);
\filldraw (2.8,.1) circle (1pt);
\filldraw (3.2,2.8) circle (1pt);
\filldraw (3.6,2.8) circle (1pt);
\filldraw (4,.1) circle (1pt);
\filldraw (4.4,.1) circle (1pt);
\filldraw (4.8,.1) circle (1pt);
\filldraw (5.2,.7) circle (1pt);
\filldraw (5.6,.63) circle (1pt);
\draw[thick] (0,.63) --(.4,.7) --(.8,.1) --(1.2,.1) --(1.6,.1) --(2.0,.1)
--(2.4,.1) --(2.8,.1) --
 (3.2,2.8) --(3.6,2.8) --(4,.1) --(4.4,.1) --
 (4.8,.1) --(5.2,.7) --(5.6,.63);
\draw[thin,dashed](0,2.8)--(0,0)--(5.7,0);
\draw (2.8,-.5) node {$i$};
\draw (-.5,1.4) node {$b_i(M)$};
\end{tikzpicture}
\hspace{1cm}
\begin{tikzpicture}
\filldraw (0,1.9) circle (1pt);
\filldraw (.4,2) circle (1pt);
\filldraw (.8,.1) circle (1pt);
\filldraw (1.2,2) circle (1pt);
\filldraw (1.6,2) circle (1pt);
\filldraw (2,.1) circle (1pt);
\filldraw (2.4,2) circle (1pt);
\filldraw (2.8,2) circle (1pt);
\filldraw (3.2,.1) circle (1pt);
\filldraw (3.6,2) circle (1pt);
\draw (4,.2) node {\dots};
\filldraw (4.4,2) circle (1pt);
\filldraw (4.8,.1) circle (1pt);
\filldraw (5.2,2) circle (1pt);
\filldraw (5.6,1.9) circle (1pt);
\draw[thick] (0,1.9) -- (.4,2) -- (.8,.1)
--(1.2,2)--(1.6,2)--(2,.1)--(2.4,2)--(2.8,2)--(3.2,.1)--(3.6,2);
\draw[thick] (4.4,2) -- (4.8,.1) -- (5.2,2) --(5.6,1.9);
\draw[thin,dashed](0,2.8)--(0,0)--(5.7,0);
\draw (2.8,-.5) node {$i$};
\draw (-.5,1.4) node {$b_i(N)$};
\end{tikzpicture}
\caption{On the left, we illustrate the shape of 
$\mathbf{v}=(1-\frac{\delta}{2}, 1, \delta, \delta, \delta, \delta,
\delta, \delta, 4, 4, \delta, \delta, \delta, 1, 1-\frac{\delta}{2})$
where $0<\delta\ll 1$ is a rational number.
On the right, we illustrate an oscillating shape,
as in Example~\ref{ex:oscillate}.  
Each arises as the shape of some minimal free resolution.}
\label{fig:v}
\end{figure}
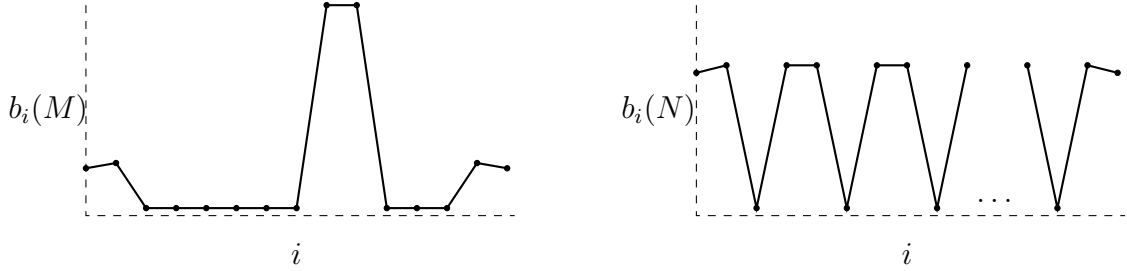
As mentioned above, our work is inspired by the Boij--S\"oderberg 
perspective that the numerics of minimal free resolutions over a graded 
polynomial ring $S$ are easier to understand if one works up to scalar multiple. 
They introduced the cone of Betti diagrams over $S$ and provided 
conjectures about the structure of this cone.  Their conjectures were proven and 
extended in a series of papers~\cites{boij-sod1,boij-sod2,efw,ES-JAMS}. 
(See also~\cite{ES:ICMsurvey} for a survey.)

To provide a local version of Boij--S\"oderberg theory, we study the
\defi{cone of Betti sequences} $\BQ(R)$, which we define to be 
the convex cone spanned by all points $b^R(M)\in \VV$, 
where $M$ is a finitely generated $R$-module.  
Theorem~\ref{thm:regmain} implies that the closure of $\BQ(R)$ 
is spanned by the rays corresponding to $\epsilon_0$ and 
$(\epsilon_i+\epsilon_{i+1})$ for $i=0,\dots, n-1$.  
The point $(\epsilon_i+\epsilon_{i+1})$ can be
interpreted as the Betti sequence of the non-minimal complex
$(R^1\stackrel{\sim}{\longleftarrow}R^1)$, where the copies of $R$ lie in
homological positions $i$ and $i+1$. Since this is not itself a
minimal free resolution, it follows that $\BQ(R)$ is not a closed
cone, in contrast with the graded case. 
The facet equation description of $\BQ(R)$ is also simpler than in the graded case: by Proposition~\ref{prop:moredetailed} 
below, all facets are given by partial Euler characteristics.

Our proof of Theorem~\ref{thm:regmain} relies on a limiting 
technique that is possible because we study Betti sequences 
in $R$ only up to scalar multiple; the introduction of the 
rational points of $\BQ(R)$, which can be thought of as 
formal $\QQ$-Betti sequences, enables the use of this technique. 
To produce the necessary limiting sequences, we first produce 
local analogues of the Eisenbud--Schreyer pure resolutions, 
as we have precise control over their Betti numbers. 

We emphasize here the fact that $\BQ(R)$ depends only on 
the dimension of $R$.  In particular, the result is the same 
for both equicharacteristic and mixed characteristic rings. 

\subsection*{Hypersurface rings}
We also examine the shapes of
minimal free resolutions over the simplest singular local rings:
hypersurface rings.  
Given a regular local ring $(R,\mathfrak m_R)$, 
we say that $Q$ is a \defi{hypersurface ring of} $R$ 
if $Q=R/\<f\>$ and $f\in \mathfrak m_R^2$.

Unlike the regular local case, free resolutions are not necessarily
finite in length over a hypersurface ring.  Hence Betti sequences
$b^Q(M)$ lie in an infinite dimensional vector space
$\WW:=\prod_{i=0}^\infty \QQ$.  We let $\{\epsilon_i\}$ denote the
coordinate vectors of $\WW$ and we write elements of $\WW$ as possibly
infinite sums $\sum_{i=0}^\infty a_i\epsilon_i$. We also view $\VV$ as
a subspace of $\WW$ in the natural way.

The key tool for studying free resolutions over a hypersurface ring is
the \defi{standard construction} (which is briefly reviewed in
\S\ref{sec:cone:all:hyp}). Given a $Q$-module $M$, this builds a
(generally non-minimal) $Q$-free resolution of $M$ from the minimal
$R$-free resolution of $M$.  The numerics of this free resolution of
$M$ are easy to understand in terms of $b^R(M)$.  Define $\Phi \colon
\WW\to \WW$ by
\[
\Phi(v_0,v_1,v_2,\dots):=(v_0, v_1, v_0+v_2,v_1+v_3,v_0+v_2+v_4,\dots).
\]
The standard construction for $M$ yields a (generally non-minimal)
resolution $G_\bullet$ with Betti sequence
$b^Q(G_\bullet)=\Phi(b^R(M))$.

Due to this close connection between free resolutions over $R$ and
over $Q$, it is tempting to conjecture that the numerics of $\BQ(Q)$
should be controlled by the cone $\BQ(R)$ and the map $\Phi$.
However, additional ingredients are clearly required.  First, the
sequence $\Phi(b^R(M))$ always has infinite length, whereas there do
exist minimal free resolutions over $Q$ with finite projective
dimension.  Second, if an $R$-module $M$ is annihilated by some
polynomial $f$, then it automatically has rank $0$ as an
$R$-module. Thus we should only be interested in applying $\Phi$ to
modules of rank $0$.

The following theorem shows that 
all minimal free resolutions over hypersurface 
rings of $R$ are controlled by correcting precisely these two factors.  

\begin{theorem}\label{thm:hypmain}
Let $(R, \mathfrak{m}_R)$ be an $n$-dimensional regular local ring,
let $\overline R$ be an $(n-1)$-dimensional regular local ring, 
and fix $\mathbf{w} := (w_i)_{i=0}^\infty \in \WW$. 
Then the following are equivalent:
\begin{enumerate}[\rm (i)]
\item\label{thm:hypmain:mod} There exists $f\in \mathfrak
  m_R$, a positive integer $\lambda$, and a finitely generated $R/\<f\>$-module
  $M$ such that $b^{R/\<f\>}(M)=\lambda
  \mathbf{w}$.
\item\label{thm:hypmain:sum} 
	There exists an $R$-module $M_1$ of  rank $0$ 
	and an $\overline{R}$-module $M_2$ such that
$
\mathbf{w}=\Phi(b^R(M_1))+b^{\overline{R}}(M_2).
$
\end{enumerate}
\end{theorem}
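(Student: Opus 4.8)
The plan is to treat the two implications separately, with the bulk of the work going into \eqref{thm:hypmain:sum}$\Rightarrow$\eqref{thm:hypmain:mod}. For the forward direction \eqref{thm:hypmain:mod}$\Rightarrow$\eqref{thm:hypmain:sum}: given $M$ over $Q=R/\langle f\rangle$, I would stratify the minimal $Q$-free resolution using the standard construction in reverse. Write $Q=R/\langle f\rangle$; if $f=0$ then $Q$ is regular of dimension $n$ or we should instead think of $M$ as an $\overline R$-module after a change of rings, and this contributes the $b^{\overline R}(M_2)$ term. If $f\neq 0$, then $f\in\mathfrak m_R^2$ is a nonzerodivisor and any $Q$-module $M$ has rank $0$ as an $R$-module, so $b^R(M)$ is a legitimate input for $\Phi$. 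The key point is that the minimal $Q$-free resolution of $M$ differs from the output $\Phi(b^R(M))$ of the standard construction only by a (possibly infinite) trivial complex, i.e.\ a direct sum of pieces $(R^1\xleftarrow{\sim}R^1)$; accounting for how much cancellation occurs, and which ``eventually periodic tail versus finite part'' splitting results, will produce the decomposition $\mathbf w=\Phi(b^R(M_1))+b^{\overline R}(M_2)$ with $M_1$ of rank $0$. One must be slightly careful that the finite-length correction term is genuinely realizable as $b^{\overline R}(M_2)$ for some module over a regular local ring of dimension $n-1$, but this follows from Theorem~\ref{thm:regmain} applied to $\overline R$, since any finite trivial-complex correction has a shape of the allowed form.

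For the harder converse \eqref{thm:hypmain:sum}$\Rightarrow$\eqref{thm:hypmain:mod}: suppose $\mathbf w=\Phi(b^R(M_1))+b^{\overline R}(M_2)$ with $M_1$ of rank $0$ over $R$. The natural first attempt is to pick $f\in\mathfrak m_R$ annihilating $M_1$ (possible since $M_1$ has rank $0$, after possibly replacing $M_1$ by a module with the same Betti sequence whose annihilator contains a nonzerodivisor), set $Q=R/\langle f\rangle$, and try to realize $\mathbf w$ as $b^Q(M)$ for $M=M_1\oplus(\text{something accounting for }M_2)$. The difficulty is twofold. First, applying the standard construction to $M_1$ over $Q$ gives a \emph{generally non-minimal} resolution with Betti sequence exactly $\Phi(b^R(M_1))$, but after minimization we might lose some Betti numbers; we need to arrange that \emph{no} cancellation occurs, i.e.\ the standard construction is already minimal. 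Second, we need to graft on a $Q$-module whose minimal resolution contributes precisely $b^{\overline R}(M_2)$ — but $\overline R$ has dimension $n-1$, not $n$, so this is a finite resolution over a ring of smaller dimension that we must realize over $Q$.

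The strategy I would use to overcome the first obstacle is the \emph{asymptotic} technique the authors advertise in the introduction: rather than realizing $\mathbf w$ on the nose, realize a large scalar multiple $\lambda\mathbf w$, using the freedom to add many copies of trivial complexes and to take $M_1$ in a large family. Concretely, I expect one builds $M_1$ as (a direct sum of twists of) a local analogue of an Eisenbud--Schreyer pure resolution — the construction the introduction promises — chosen so that the annihilating hypersurface $f$ can be taken with $f\in\mathfrak m_R^2$ and so that the standard construction over $R/\langle f\rangle$ is minimal because the resolution is ``pure enough'' that the comparison maps in the standard construction have entries in $\mathfrak m_Q$. For the $M_2$ term, one realizes $b^{\overline R}(M_2)$ first over $\overline R$ by Theorem~\ref{thm:regmain}, then lifts: choose a regular system of parameters so that $R\twoheadrightarrow\overline R$ is a quotient by a nonzerodivisor $g\in\mathfrak m_R$ lying \emph{inside} the ideal $\langle f\rangle$ — equivalently arrange $f\mid g$ — so that $M_2$, viewed as a module over $\overline R=R/\langle g\rangle$, is also a $Q=R/\langle f\rangle$-module, and its minimal $Q$-free resolution is obtained from its minimal $\overline R$-free resolution by base change, which is still minimal and has the \emph{same} Betti sequence $b^{\overline R}(M_2)$ (this is where finiteness of the $\overline R$-resolution is used: the standard construction's correction terms vanish). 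Taking $M=M_1\oplus M_2$ as $Q$-modules then gives $b^Q(M)=\Phi(b^R(M_1))+b^{\overline R}(M_2)=\mathbf w$, up to the scalar $\lambda$. The main obstacle I anticipate is the simultaneous compatibility of the two hypersurface choices — needing one element $f\in\mathfrak m_R^2$ that (a) annihilates $M_1$, (b) makes the standard construction for $M_1$ minimal, and (c) divides the parameter $g$ cutting out $\overline R$ — and this is exactly where the flexibility of passing to scalar multiples $\lambda\mathbf w$ and of choosing the pure-type module $M_1$ will have to be exploited.
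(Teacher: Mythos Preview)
Your approach to (ii)$\Rightarrow$(i) contains a genuine error in how you realize $b^{\overline R}(M_2)$ over $Q=R/\langle f\rangle$. You propose to arrange $f\mid g$ so that $\overline R=R/\langle g\rangle$ is a quotient of $Q$, and then claim the minimal $Q$-resolution of $M_2$ coincides with its $\overline R$-resolution ``by base change.'' This fails on two counts. First, if $f\in\mathfrak m_R^2$ (which you need for the standard construction on $M_1$ to be nontrivial) and $f\mid g$, then $g\in\mathfrak m_R^2$ and cannot be a regular parameter. Second, even when $\overline R$ is a quotient of $Q$, restricting an $\overline R$-module to $Q$ does \emph{not} give $b^Q(M_2)=b^{\overline R}(M_2)$: resolving over the larger ring $Q$ makes the resolution longer (indeed infinite, since $\overline R$ has infinite projective dimension over $Q$), not shorter. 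Base change along a surjection goes the wrong direction for what you want.

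The paper does the opposite: instead of a quotient, one chooses a regular local \emph{subring} $R'\subseteq Q$ of dimension $n-1$ over which $Q$ is finite and flat (a Noether normalization of the hypersurface). Then for any $R'$-module $M'$, tensoring a minimal $R'$-resolution with $Q$ preserves exactness and minimality, so $b^{Q}(M'\otimes_{R'}Q)=b^{R'}(M')$. Since such an $R'$ exists for every nonzero $f$, this is fully compatible with taking $f$ deep in $\mathfrak m_R$ as required by Lemma~\ref{lem:phiexact}. Your sketch of (i)$\Rightarrow$(ii) is also too loose: taking $M_1=M$ gives $\Phi(b^R(M))\geq b^Q(M)$ coordinatewise, so the discrepancy has the wrong sign to be $b^{\overline R}(M_2)$. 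The paper instead argues via the cone description (Proposition~\ref{prop:moreDetailedHyper}): once $b^Q(M)$ is shown to satisfy the facet inequalities, it decomposes along the extremal rays $\rho_{-1},\dots,\rho_{n-2},\tau^\infty_{n-2},\tau^\infty_{n-1}$, and these regroup as $\Phi(\text{rank-zero part})+(\overline R\text{-part})$ using $\tau^\infty_i=\Phi(\rho_i)$ and Theorem~\ref{thm:regmain}.
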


This demonstrates that, except for eventual periodicity, 
there are essentially no bounds on the shape of a minimal free resolution 
over a hypersurface ring of $R$. As in the regular local case, this 
leads to examples of free resolutions with surprising behavior.
For instance, fix any $\delta>0$ and let $R=\QQ[[x_1,\dots,x_{14}]]$.
Applying Theorem~\ref{thm:regmain}, there exist $M_1$ and $M_2$ 
so that 
$\mathbf{w}=\Phi(b^R(M_1))+b^{\overline{R}}(M_2)$, where
\[
\mathbf{w}:=(\tfrac{\delta}{2},4,4,\delta,\delta,\delta,\delta,\delta,
\delta,\delta,\delta,1,1,\delta,6+\tfrac{\delta}{2},6,6,6,\dots).
\]  
Since $\mathbf{w}$ satisfies
Theorem~\ref{thm:hypmain}\eqref{thm:hypmain:sum}, there exists a
module $M$ over a hypersurface ring of $R$ whose
minimal free resolution has this shape. 

We now make the connection with local Boij--S\"oderberg theory explicit.

\vspace{.25mm}

\begin{definition}\label{defn:Rtot}
The \defi{total hypersurface cone} $\BQtot$ is the closure in
  $\WW$ of the union $\bigcup_{f \in \mathfrak m_R} \BQ(R/\<f\>)$.
\end{definition}

\vspace{.35mm}
We show in Remark~\ref{rmk:equals asymptotic} 
that the cone $\BQtot$ may also be realized as a limit of cones
\begin{equation}\label{eqn:equals asymptotic}
\BQtot = \lim_{t\to \infty} \BQ(R/\<f_t\>)\subseteq \WW
\end{equation}
for any sequence $(f_t \in \mathfrak m^t_R)_{ t \geq 1}$. 

The following result provides an 
extremal rays description of this cone.

\begin{proposition}\label{prop:hypersurface rays}
The cone $\BQtot$ is an $(n+1)$-dimensional subcone of $\WW$ 
spanned by the following list of $(n+2)$ extremal rays:
\begin{enumerate}[\rm (i)]
\item the ray spanned by $\epsilon_0$,
\item the rays spanned by $(\epsilon_i+\epsilon_{i+1})$ for
  $i\in\{0, \dots, n-2\}$, and 
\item  the rays spanned by
  \[
  \sum_{i=n-2}^\infty \epsilon_i \quad \text{ and }\quad
  \sum_{i=n-1}^\infty \epsilon_i.
  \]
\end{enumerate}
\end{proposition}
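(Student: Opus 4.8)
The plan is to deduce Proposition~\ref{prop:hypersurface rays} from Theorem~\ref{thm:hypmain} together with Theorem~\ref{thm:regmain}, treating the total hypersurface cone as the image of the two input cones under the map $\Phi$. First I would observe that by Theorem~\ref{thm:hypmain}, a point $\mathbf{w}\in\WW$ lies in $\bigcup_f\BQ(R/\<f\>)$ (up to positive scalar) precisely when it can be written as $\Phi(b^R(M_1))+b^{\overline R}(M_2)$ with $M_1$ of rank $0$ over the $n$-dimensional regular local ring $R$ and $M_2$ an arbitrary module over the $(n-1)$-dimensional regular local ring $\overline R$. Passing to closures and using convexity, $\BQtot$ is the closed convex cone generated by $\Phi\bigl(\overline{\BQrank(R)}\bigr)\cup\overline{\BQ(\overline R)}$, where $\BQrank(R)$ denotes the subcone of $\BQ(R)$ spanned by Betti sequences of rank-$0$ modules. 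So the task reduces to (a) identifying the extremal rays of each of these two pieces and (b) checking which of the resulting rays survive as extremal rays of the combined cone.

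For step (a): by Theorem~\ref{thm:regmain}, $\overline{\BQ(\overline R)}$ (with $\dim\overline R=n-1$) is spanned by $\epsilon_0$ and by $\epsilon_i+\epsilon_{i+1}$ for $i=0,\dots,n-2$. For the rank-$0$ part, a finitely generated $R$-module has rank $0$ iff its resolution has length $<n$ (equivalently depth $\ge 1$ forces... — more precisely, rank $0$ means the alternating sum of the $b_i$ vanishes, and by Auslander--Buchsbaum this happens exactly when $\projdim M<n$, i.e. $M$ has positive depth when... ); in any case, by the $d\ge 1$ case of Theorem~\ref{thm:regmain}, $\overline{\BQrank(R)}$ is spanned by $\epsilon_0$ and $\epsilon_i+\epsilon_{i+1}$ for $i=0,\dots,n-2$. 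Now apply $\Phi$: one computes directly from the definition of $\Phi$ that $\Phi(\epsilon_0)=\epsilon_0+\epsilon_2+\epsilon_4+\cdots$, that $\Phi(\epsilon_i+\epsilon_{i+1})=\epsilon_i+\epsilon_{i+1}$ for $i\le n-3$ (the "overlap" terms $\epsilon_{i+2}+\epsilon_{i+3},\dots$ cancel in pairs because $\epsilon_i+\epsilon_{i+1}$ is the Betti sequence of an exact complex), and that $\Phi(\epsilon_{n-2}+\epsilon_{n-1})=\epsilon_{n-2}+\epsilon_{n-1}+\epsilon_n+\cdots=\sum_{i\ge n-2}\epsilon_i$. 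The combined spanning set is therefore $\{\epsilon_0,\ \epsilon_i+\epsilon_{i+1}\ (0\le i\le n-2),\ \sum_{i\ge 0}\epsilon_{2i},\ \sum_{i\ge n-2}\epsilon_i\}$.

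For step (b): I would show that $\sum_{i\ge 0}\epsilon_{2i}$ is \emph{not} extremal — it should decompose as a nonnegative combination of $\epsilon_0$, the $\epsilon_i+\epsilon_{i+1}$, and $\sum_{i\ge n-2}\epsilon_i$, after which we are left exactly with the $n+2$ rays in the statement: $\epsilon_0$; $\epsilon_i+\epsilon_{i+1}$ for $i=0,\dots,n-2$; and $\sum_{i\ge n-2}\epsilon_i$. Wait — the statement lists $\epsilon_i+\epsilon_{i+1}$ only for $i\le n-2$ and then \emph{two} tail rays $\sum_{i\ge n-2}\epsilon_i$ and $\sum_{i\ge n-1}\epsilon_i$; note $\sum_{i\ge n-1}\epsilon_i = \sum_{i\ge n-2}\epsilon_i - \epsilon_{n-2}$, so replacing $\epsilon_{n-2}+\epsilon_{n-1}$ by $\sum_{i\ge n-1}\epsilon_i$ in the spanning set changes nothing, and I would phrase the final list in whichever of these two equivalent bases makes the extremality argument cleanest. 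To finish, I must (i) verify dimension: the $n+2$ vectors lie in the span of coordinates $0,\dots$ but their differences show the affine span is $(n+1)$-dimensional (e.g. project to the first $n+1$ coordinates and check the images are linearly independent), and (ii) verify each listed ray is genuinely extremal, by exhibiting for each one a supporting linear functional (a partial Euler characteristic, exactly as flagged for the regular case via Proposition~\ref{prop:moredetailed}) that vanishes on it and is strictly positive on all the others.

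The main obstacle I anticipate is step (b): proving that the proposed $n+2$ rays are exactly the extremal ones, i.e. that no listed ray lies in the cone on the others and that nothing is missing. Showing \emph{spanning} is a direct $\Phi$-computation; showing \emph{extremality} requires producing the right separating functionals, and the subtlety is that $\WW$ is infinite-dimensional, so one cannot invoke a finite-dimensional convex-geometry black box directly — one must check that all the relevant Betti sequences, and $\Phi$ of them, actually lie in the finite-dimensional (or finitely-generated) subcone cut out by finitely many partial-Euler-characteristic inequalities, and that these inequalities are tight on precisely the claimed rays. Handling the tail rays $\sum_{i\ge n-2}\epsilon_i$ and $\sum_{i\ge n-1}\epsilon_i$ — which come from $\Phi$ applied to the top two overlap classes and encode the eventual periodicity — will need a little care, since the "non-closedness" phenomena present for $\BQ(R)$ persist here and one must be sure the \emph{closure} in Definition~\ref{defn:Rtot} really adds these rays.
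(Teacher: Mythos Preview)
There is a concrete computational error that breaks your spanning-set argument. You claim $\Phi(\epsilon_i+\epsilon_{i+1})=\epsilon_i+\epsilon_{i+1}$ for $i\le n-3$, with ``overlap terms cancelling in pairs because $\epsilon_i+\epsilon_{i+1}$ is the Betti sequence of an exact complex.'' This is false: from the definition, the $\ell$th coordinate of $\Phi(\mathbf{w})$ is $\sum_{j\le\ell,\ j\equiv\ell\ (2)}w_j$, so $\Phi(\rho_i)=\sum_{j\ge i}\epsilon_j=\tau^\infty_i$ for \emph{every} $i$ (the paper states this explicitly just before Proposition~\ref{prop:moreDetailedHyper}). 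There is no cancellation; $\Phi$ simply accumulates. Your description of the rank-zero cone is also off: rank zero means $\chi_{[0,n]}=0$, i.e.\ $a_{-1}=0$ in Theorem~\ref{thm:regmain}, which is unrelated to $d\ge1$ (e.g.\ $R/\fm$ has rank $0$ and depth $0$); thus $\overline{\BQrank(R)}$ is spanned by $\rho_0,\dots,\rho_{n-1}$, including $\rho_{n-1}$ and excluding $\epsilon_0$. With both corrections the combinatorics can be salvaged, since $\tau^\infty_i=\rho_i+\tau^\infty_{i+2}$ makes $\tau^\infty_0,\dots,\tau^\infty_{n-3}$ redundant and one does recover the listed $n+2$ rays.

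The deeper problem is structural: you take Theorem~\ref{thm:hypmain} as a black box, but in the paper Proposition~\ref{prop:hypersurface rays} is proved directly (as Proposition~\ref{prop:moreDetailedHyper}) and no independent proof of Theorem~\ref{thm:hypmain} precedes it, so your route is circular in the paper's logical order. The paper's argument instead (a) identifies the ray cone with the cone cut out by the functionals $\chi_{[i,j]}\ge0$ (working out which of the $\binom{n+2}{2}$ candidate facets are genuine); (b) shows $\BQtot$ lies in that cone by interpreting $\chi_{[i,j]}(b^Q(M))$ in terms of multiplicities of syzygy modules and, for $\chi_{[n-1,n]}$, the number of free summands of $\Omega^{n-1}(M)$; and (c) shows each listed ray lies in $\BQtot$ --- the $\rho_i$ by passing through a regular $(n{-}1)$-dimensional subring $R'\subset R/\langle f\rangle$, and the $\tau^\infty_i$ by a second limit in which one lets $f_t\in\fm^t$ with $t\to\infty$ so that, via Lemma~\ref{lem:phiexact}, the standard construction becomes minimal and $\Phi(b^R(M))=b^{R/\langle f_t\rangle}(M)$ on the nose. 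Step~(c) is precisely the substantive content that your appeal to Theorem~\ref{thm:hypmain} assumes away.
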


\smallskip
The proofs of Theorem~\ref{thm:hypmain} and Proposition~\ref{prop:hypersurface rays}
rely on two types of asymptotic arguments.  First, as in the proof of Theorem~\ref{thm:regmain},
we study sequences of formal $\QQ$-Betti sequences.  Second, we use
that the cone $\BQtot$ is itself a limit, as illustrated in \eqref{eqn:equals asymptotic}.

In Proposition~\ref{prop:moreDetailedHyper}, we also describe the cone
$\BQtot$ in terms of defining hyperplanes.  In addition, we observe
that, as in the description of $\BQ(R)$, most of the extremal rays of
$\BQtot$ do not correspond to actual minimal free resolutions.  Note
that, based on \eqref{eqn:equals asymptotic}, the cone $\BQ(R/\<f\>)$
is closely approximated by $\BQtot$, at least when the Hilbert--Samuel
multiplicity of $R/\<f\>$ is large.

We end by considering the more precise question of completely describing 
$\BQ(R/\<f\>)$ for a fixed $f\in \mathfrak m_R$. 
The following conjecture claims that the cone $\BQ(R/\<f\>)$ depends 
only on the dimension and multiplicity of the hypersurface ring $R/\<f\>$.

\begin{conjecture}\label{conj:hyper:equiv}
Let $Q$ be a hypersurface ring of embedding dimension $n$ 
and multiplicity $d$. 
Then $\BQ(Q)$ is an $(n+1)$-dimensional cone, and its closure is defined 
by the following $(n+2)$ extremal rays:
\begin{enumerate}[\rm (i)]
\item the ray spanned by $\epsilon_0$, 
\item the rays spanned by $(\epsilon_i+\epsilon_{i+1})$ 
	for $i=\{0, \dots, n-2\}$, and 
\item the rays spanned by
  \[
  \tfrac{d-1}{d}\epsilon_{n-2}+\sum_{i=n-1}^\infty \epsilon_i \quad
  \text{ and }\quad \tfrac{1}{d}\epsilon_{n-2}+\sum_{i=n-1}^\infty
  \epsilon_i.
	\]
\end{enumerate}
\end{conjecture}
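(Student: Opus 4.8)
\emph{Strategy.} One must prove the two inclusions implicit in the statement: that $b^Q(M)$ lies in the cone $\mathcal C$ spanned by the listed rays for every finitely generated $Q$-module $M$, and conversely that every point of $\overline{\mathcal C}$ is a limit of rescaled Betti sequences $\tfrac1\lambda b^Q(N)$. This is the multiplicity-refined version of Proposition~\ref{prop:hypersurface rays}: letting $d\to\infty$ turns the two rays in (iii) into $\sum_{i\geq n-2}\epsilon_i$ and $\sum_{i\geq n-1}\epsilon_i$, so one recovers $\BQtot$. The extra structure needed to see the multiplicity $d$ inside resolutions is the Weierstrass description of $Q$: after completing and choosing generic coordinates we may write $R=\overline R[[x]]$ and $f=x^d+a_{d-1}x^{d-1}+\cdots+a_0$ with $a_i\in\mathfrak m_{\overline R}$, so that $Q$ is free of rank $d$ over the regular subring $\overline R$ and the class of $x$ acts on $Q/\mathfrak m_{\overline R}Q$ as a single nilpotent Jordan block of size $d$.

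\emph{The key mechanism.} If $X$ is a maximal Cohen--Macaulay $Q$-module then, since $\dim\overline R=n-1$, $X$ is also maximal Cohen--Macaulay, hence free, over $\overline R$; writing $X\cong\overline R^{\,r}$, the action of $x$ on $X$ is an $r\times r$ matrix $T$ over $\overline R$ with $f(T)=0$, and one has $r=\rank_{\overline R}X=d\cdot\rank_QX$ and $\mu_Q(X)=r-\rank_k(\overline T)$, where $\overline T=T\bmod\mathfrak m_{\overline R}$ satisfies $\overline T^{\,d}=0$. In particular $\mu_Q(X)\leq d\cdot\rank_QX$. Now let $M$ be a $Q$-module. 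If $\mathrm{pd}_QM<\infty$ its minimal resolution is finite and the analysis in the proof of Theorem~\ref{thm:regmain} (for a regular ring of dimension $n-1$) applies verbatim, placing $b^Q(M)$ in the subcone spanned by $\epsilon_0$ and the $\epsilon_i+\epsilon_{i+1}$. Otherwise let $j$ be minimal with $b_j^Q(M)=b_{j+1}^Q(M)=\cdots=:t$; then $N:=\Omega^j_QM$ is a maximal Cohen--Macaulay module with no free summand, so $\Omega^1_QN$ is maximal Cohen--Macaulay and $\mu_Q(N)=\mu_Q(\Omega^1_QN)=t$. Applying $\mu_Q(X)\le d\cdot\rank_QX$ to $X=N$ gives $t\le d\cdot\rank_QN$, and applying it to $X=\Omega^1_QN$, whose $Q$-rank is $t-\rank_QN$, gives $t\le d\,(t-\rank_QN)$. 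Together these yield $\tfrac1d\,t\le\rank_QN\le\tfrac{d-1}{d}\,t$, the multiplicity-sensitive constraint.

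\emph{Containment and realization.} Combining $\tfrac1d t\le\rank_QN\le\tfrac{d-1}{d}t$ with the non-negativity of the ranks of all syzygies --- the identities $\rank_Q\Omega^{i+1}_QM=b_i-\rank_Q\Omega^i_QM\ge 0$, which give exactly the facets already present for $\BQ(R)$ and $\BQtot$ --- should express $b^Q(M)$ as a non-negative combination of the listed rays and rule out any ray outside the list, yielding the $(n+1)$-dimensionality and the extremal-ray description; a hyperplane description in the spirit of Proposition~\ref{prop:moreDetailedHyper} should follow by dualizing. For realization, $\epsilon_0$ is $b^Q(Q)$; the rays $\epsilon_i+\epsilon_{i+1}$ for $i\le n-2$ come from the $b^{\overline R}(M_2)$ summand in Theorem~\ref{thm:hypmain} together with Theorem~\ref{thm:regmain} over the $(n-1)$-dimensional ring $\overline R$; and the two tail rays are realized as limits. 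Concretely, for $s\to\infty$ one builds depth-$0$ modules $M_s$ whose stable syzygy $N_s=\Omega^{\,n-1}_QM_s$ is prescribed to be $\overline R^{\,ds}$ with $x$ acting so that $\overline T\equiv 0$ --- forcing $\mu_Q(N_s)=ds$ and driving $\tfrac1{ds}b^Q(M_s)$ to $R_2=\tfrac1d\epsilon_{n-2}+\sum_{i\ge n-1}\epsilon_i$ --- or so that $\overline T$ is a direct sum of nilpotent Jordan blocks of size $d-1$ (taking $(d-1)\mid s$), which makes $\rank_QN_s/\mu_Q(N_s)=\tfrac{d-1}{d}$ and drives the rescaled sequence to $R_1=\tfrac{d-1}{d}\epsilon_{n-2}+\sum_{i\ge n-1}\epsilon_i$; one arranges throughout that the finitely many transient Betti numbers stay bounded.

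\emph{Main obstacle.} Two points look genuinely hard. First, converting the clean bound $\tfrac1d t\le\rank_QN\le\tfrac{d-1}{d}t$ into a complete facet description valid for all depths at once --- and correctly gluing in the finite-projective-dimension locus, where the stable syzygy is \emph{free}, i.e. $\overline T$ has the maximal nilpotent rank $ds-s$ --- is delicate; in the graded setting the analogous multiplicity bounds are produced by Boij--S\"oderberg duality with Eisenbud--Schreyer cohomology tables, a tool with no local counterpart here. Second, the realization of the tail rays is demanding: one must actually produce genuine $Q$-modules $M_s$ with a prescribed maximal Cohen--Macaulay stable syzygy \emph{and} uniformly bounded transient Betti numbers \emph{and} the correct (zero) depth, and then prove that the normalized Betti sequences converge in $\WW$ to exactly $R_1$ and $R_2$ --- more than controlling an asymptotic stable value as in the proof of Proposition~\ref{prop:hypersurface rays}. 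Finally one must check that the initial reductions (completion, generic coordinates, and the case of a reducible $f$) lose nothing.
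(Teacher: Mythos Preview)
The statement you are attempting to prove is a \emph{conjecture}: the paper does not claim a proof of it.  What the paper actually establishes is one containment (Proposition~\ref{prop:one direction}: $\overline{\BQ(Q)}$ lies inside the cone spanned by the listed rays) and the full statement only when $\edim(Q)=2$.  The realization direction---showing that the two multiplicity-sensitive tail rays $\tau^d_{n-2}$ and $\tau^d_{n-1}$ lie in $\overline{\BQ(Q)}$ for a \emph{fixed} $f$---is left open for $n\ge 3$.

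Your containment argument is a genuine alternative to the paper's.  The paper proves Proposition~\ref{prop:one direction} by introducing the functionals $\xi^d_{[i,n]}$ and checking directly, via additivity of multiplicity along the truncated resolution and the inequality $e(\Omega^{n+1}(M))\ge\mu(\Omega^{n+1}(M))$, that $\xi^d_{[i,n]}(b^Q(M))\ge 0$.  You instead pass through Weierstrass preparation to realize $Q$ as free of rank $d$ over a regular subring $\overline R$, and read off $\mu_Q(X)=r-\rank_k(\overline T)$ for a maximal Cohen--Macaulay $X\cong\overline R^{\,r}$; the nilpotency $\overline T^{\,d}=0$ then gives both $\mu_Q(X)\ge\rank_QX$ and $\mu_Q(X)\le d\cdot\rank_QX$, and applying this to the stable syzygy and its first syzygy yields the two-sided bound $\tfrac1d t\le\rank_QN\le\tfrac{d-1}{d}t$.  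This is an attractive reformulation and morally equivalent to the paper's inequality, though you have not yet written down the bridge from these rank bounds to the full facet description (the $\xi^d_{[i,n]}$ inequalities for \emph{all} $i$, not just the stable range), which the paper does explicitly.

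For the realization direction your own ``Main obstacle'' section is accurate: the construction of modules $M_s$ with prescribed stable syzygy \emph{and} controlled transient Betti numbers \emph{and} depth zero is exactly the missing piece.  In the paper this is handled only for $n=2$, where one can simply take $Q'=Q/\fm_Q^{d-1}$ and its canonical module and compute $d\tau^d_1=b^Q(Q')$ and $d\tau^d_0=b^Q(\omega_{Q'})$ directly; there is no transient part to control.  Your proposed limiting construction for general $n$ (choosing $\overline T\equiv 0$ or a sum of size-$(d-1)$ Jordan blocks) specifies the stable syzygy but does not explain how to build $M_s$ from $N_s$ so that $b^Q_{n-2}(M_s)/t$ tends to the required value $\tfrac1d$ or $\tfrac{d-1}{d}$ while the earlier Betti numbers remain $o(t)$.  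That is the genuine gap, and it is the reason the statement remains a conjecture in the paper.
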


Proposition~\ref{prop:one direction} proves one direction of this
conjecture, by showing that $\BQ(Q)$ belongs to the cone spanned by
the proposed extremal rays. We also prove
Conjecture~\ref{conj:hyper:equiv} when $\edim(Q)=2$.  Observe also
that Proposition~\ref{prop:hypersurface rays} is essentially the
$d=\infty$ version of this conjecture.
%

\subsection*{Notation}
Throughout the rest of this document $R$ will be a regular local ring
and $Q$ will be a quotient ring of $R$.  If $M$ is an $R$-module or a
$Q$-module, then $e(M)$ is the Hilbert--Samuel multiplicity of $M$ and
$\mu(M)$ is the minimal number of generators for $M$.  Given a
surjection $R^{\mu(M)} \to M$, we denote the kernel by $\Omega(M)$,
and in general, we set $\Omega^{j}(M) = \Omega^1(\Omega^{j-1}(M))$,
with the convention $\Omega^0(M) = M$, and we call $\Omega^j(M)$ the
{\bf $j$th syzygy module} of $M$.

\subsection*{Acknowledgements}
Significant parts of this work were done when the second author
visited Purdue University and during a workshop funded by the Stanford
Mathematics Research Center; the paper was completed while the first 
author attended the program ``Algebraic Geometry with a view towards
applications" at Institut Mittag-Leffler; we are grateful for all of
these opportunities.  Throughout the course of this work, calculations
were performed using the software {\tt Macaulay2}~\cite{M2}.  We thank
Matthias Beck for pointing out the reference~\cite{triangulations}.
We also thank Jesse Burke, David Eisenbud, Courtney Gibbons, 
Mel Hochster, Frank-Olaf Schreyer, and Jerzy Weyman for insightful 
conversations.  
We also thank the referee for suggestions that greatly streamlined this paper. 

\section{Passage of graded pure resolutions to a regular local ring}
\label{sec:pass:pure}

To prove Theorem~\ref{thm:regmain}, we produce a collection of Betti
sequences that converge to each extremal ray of $\overline{\BQ(R)}$.
The key step in constructing these sequences is the construction of
local analogues of the pure resolutions of Eisenbud and Schreyer.

Let $S=\ZZ[x_1, \dots, x_n]$.  Fix $d=(d_0,\dots,d_s)\in \ZZ^{s+1}$
with $d_i<d_{i+1}$ and $s\leq n$.  By~\cite{BEKStensor}*{Remark~10.2}
and~\cite{ES-JAMS}*{\S5}, we may construct an $S$-module $M(d)$ that
is a generically perfect $S$-module of
codimension $s$ (and hence, $M(d)\otimes_{\ZZ} \Bbbk$ is a Cohen--Macaulay
module of codimension $s$ for every field $\Bbbk$.)

\begin{proposition}\label{prop:pureoverRLR}
  Let $R$ be an $n$-dimensional regular local ring. 
  Let $S\to R$ be any map sending $x_1, \dots, x_n$ to an 
  $R$-regular sequence.  
  Then $M(d)\otimes_{S} R$ is a Cohen--Macaulay $R$-module of codimension $s$, 
  and the Betti sequence of $M(d)\otimes_S R$ is a scalar multiple of
\[
\mathbf{v}(d):=\left(\frac{1}{\prod_{i\ne 0} |d_i-d_0|},
\frac{1}{\prod_{i\ne 1} |d_i-d_1|}, \dots, \frac{1}{\prod_{i\ne s}
|d_i-d_s|},0,\dots,0\right)\in \VV.
\]
\end{proposition}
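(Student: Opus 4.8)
The plan is to reduce everything to the known graded case over $S' = \Bbbk[x_1,\dots,x_n]$ and then transfer the resolution along the two maps $S \to S'$ (base change to a field) and $S' \to R$ (specializing to a regular sequence). First I would recall that $M(d)$ is generically perfect of codimension $s$ over $S = \ZZ[x_1,\dots,x_n]$, so that for any field $\Bbbk$, the module $M(d) \otimes_\ZZ \Bbbk$ is Cohen--Macaulay of codimension $s$ over $S \otimes_\ZZ \Bbbk = \Bbbk[x_1,\dots,x_n]$, and moreover its minimal graded free resolution is pure with degree sequence $d = (d_0 < \cdots < d_s)$. The Herzog--K\"uhl equations then pin down the graded Betti numbers up to a common scalar, and taking ranks (forgetting the grading) gives that the (ungraded) Betti sequence of $M(d) \otimes_\ZZ \Bbbk$ is a scalar multiple of $\mathbf{v}(d)$. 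This handles the statement over a polynomial ring over a field; the remaining work is to pass from there to an arbitrary $n$-dimensional regular local ring $R$.

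Next I would analyze the map $S \to R$ sending $x_1,\dots,x_n$ to an $R$-regular sequence. Let $\Bbbk$ be the residue field of $R$, so the composite $\ZZ \to S \to R$ factors through $\Bbbk$ (more precisely, we base change $M(d)$ along $\ZZ \to \Bbbk$ first). Set $N := M(d) \otimes_\ZZ \Bbbk$, a Cohen--Macaulay module of codimension $s$ over $S_\Bbbk := \Bbbk[x_1,\dots,x_n]$ with the pure resolution $F_\bullet$ described above. The key point is flatness/acyclicity: because $x_1,\dots,x_n$ map to an $R$-regular sequence, $R$ becomes a flat $S_\Bbbk$-module after localizing appropriately — more carefully, $\mathbf x$ is a regular sequence on $R$, so $R$ is a flat module over $S_\Bbbk = \Bbbk[\mathbf x]$ along the relevant local homomorphism, and thus $F_\bullet \otimes_{S_\Bbbk} R$ remains acyclic and is a free resolution of $N \otimes_{S_\Bbbk} R = M(d) \otimes_S R$. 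Since all entries of the differentials of $F_\bullet$ lie in $\mathfrak m_{S_\Bbbk} = (x_1,\dots,x_n)$, and these map into $\mathfrak m_R$, the complex $F_\bullet \otimes_{S_\Bbbk} R$ is still minimal over $R$. Hence $b^R(M(d)\otimes_S R)$ equals the sequence of ranks of $F_\bullet$, which is a scalar multiple of $\mathbf{v}(d)$. Finally, codimension is preserved: $\codim_R(M(d)\otimes_S R) = \codim_{S_\Bbbk} N = s$ because the regular sequence assumption makes the fiber dimension drop exactly as expected, and depth is preserved under this flat base change, so Cohen--Macaulayness is inherited as well.

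The main obstacle I anticipate is making the flatness argument precise: $S_\Bbbk \to R$ is a map of local rings only after passing to the localization of $S_\Bbbk$ at $(x_1,\dots,x_n)$ (or to the completion), and one needs that sending a regular system of parameters' worth of elements to an $R$-regular sequence makes $R$ flat over that localized polynomial ring — this is the standard fact that a local homomorphism $A \to B$ with $A$ regular and $B$ Cohen--Macaulay (or just: $\dim B = \dim A + \dim B/\mathfrak m_A B$ with $B/\mathfrak m_A B$ of finite length, here in fact $\mathfrak m_A B$ is $\mathfrak m_R$-primary) is flat, by the local criterion via the regular sequence. Once flatness is in hand, acyclicity of the base-changed complex, preservation of minimality, preservation of codimension, and preservation of depth (hence of the Cohen--Macaulay property) are all routine. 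I would also need to double-check that the Herzog--K\"uhl formula indeed produces exactly the normalization $\prod_{i \ne j}|d_i - d_j|^{-1}$ stated for $\mathbf v(d)$, which is a direct computation with the pure degree sequence.
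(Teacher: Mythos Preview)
Your argument has a genuine gap in the mixed characteristic case. You write ``Let $\Bbbk$ be the residue field of $R$, so the composite $\ZZ \to S \to R$ factors through $\Bbbk$,'' and then proceed to work over $S_\Bbbk = \Bbbk[x_1,\dots,x_n]$. But a regular local ring $R$ need not contain a copy of its residue field: if $R$ has mixed characteristic (e.g.\ $R=\ZZ_p[[x_2,\dots,x_n]]$), the map $\ZZ\to R$ is injective and cannot factor through any field, so there is no ring map $S_\Bbbk\to R$ compatible with $S\to R$. Even if one tries to bypass this by localizing $S=\ZZ[x_1,\dots,x_n]$ at the contraction of $\fm_R$, that localization has dimension $n+1$ whenever the residue characteristic is positive, so $R$ cannot be flat over it and your flatness transfer of the resolution breaks down. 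In the equicharacteristic case your outline is essentially fine (after the localization you indicate, the local flatness criterion does apply), but the proposition is stated for arbitrary regular local rings, and the paper explicitly stresses that the result covers the mixed characteristic case.

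The paper avoids this by invoking the machinery of \emph{generically perfect} modules (Bruns--Vetter, Theorems~3.5 and~3.9) rather than flatness. Generic perfection of $M(d)$ over $S=\ZZ[x_1,\dots,x_n]$ guarantees that for \emph{any} ring map $S\to R$ with $\text{grade}_R(M(d)\otimes_S R)\geq s$ (which holds here since $x_1,\dots,x_n$ map to an $R$-regular sequence), the base-changed complex $F_\bullet\otimes_S R$ remains acyclic and minimal, and $M(d)\otimes_S R$ is perfect of the same grade. This substitutes an acyclicity/grade argument for your flatness argument and works uniformly in all characteristics. The Herzog--K\"uhl computation for $\mathbf v(d)$ is then the same as in your sketch.
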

\begin{proof}
We have noted above that $M(d)$ is a generically
perfect $S$-module of codimension $s$.  It follows 
from~\cite{bruns-vetter}*{Theorem~3.9} that
$M(d)\otimes_S R$ is Cohen--Macaulay 
and of the same codimension as $M(d)$.  In addition, by
\cite{bruns-vetter}*{Theorem~3.5}, tensoring a 
minimal $S$-free resolution of $M(d)$ with $R$ gives 
a minimal $R$-free resolution of
$M(d)\otimes_S R$.
The formula for $\mathbf{v}(d)$ then follows from the Herzog--K\"uhl
equations~\cite {HerzogKuhlPure84}*{Theorem~1}.
\end{proof}
  
\section{Cone of Betti sequences for a regular local ring}
\label{sec:cone:reg}

Let $(R, \fm)$ be an $n$-dimensional regular local ring. 
Let $\VV := \bigoplus_{i=0}^n \QQ$, with basis $\{\epsilon_i\}$, 
where $0 \leq i \leq n$. 
For $i = 0, \ldots, n-1$, set $\rho_i := \epsilon_i+\epsilon_{i+1}$, 
and set $\rhofree:=\epsilon_0$. 
For all $i\leq j$, we define the partial Euler characteristic functionals 
\begin{align*}
\chi_{[i,j]}
	:=&\, \epsilon^*_i-\epsilon^*_{i+1}+\dots+(-1)^{j-i}\epsilon^*_j\\
	=&\sum_{\ell=i}^j(-1)^{\ell-i}\epsilon_\ell^*.
\end{align*}
For a ring $R$, we set $\overline{\BQ(R)}$ to be the closure of the
cone $\BQ(R)\subseteq \VV$, which we describe now. 

\begin{proposition}\label{prop:moredetailed}
For any $n$-dimensional regular local ring $R$, 
the following three $(n+1)$-dimensional cones are equal: 
\begin{enumerate}[\rm (i)]
\item \label{enum:RLRConeClosure}
	the closure $\overline{\BQ(R)}$ of the cone of Betti sequences.
\item \label{enum:RLRraySpan}
	the cone spanned by the rays 
	$\QQ_{\geq 0}\langle \rhofree, \rho_0, \rho_1, \dots, 
		\rho_{n-1}\rangle$. 
\item \label{enum:RLRfacets} 
	the intersection of the halfspaces defined by 
	$\chi_{[j,n]}\geq 0$ for $j\in\{0, \dots, n\}$.
\end{enumerate}
\end{proposition}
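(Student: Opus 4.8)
The plan is to prove the equality of the three cones by establishing a cycle of inclusions. I would first show \eqref{enum:RLRraySpan} $\subseteq$ \eqref{enum:RLRfacets}, then \eqref{enum:RLRfacets} $\subseteq$ \eqref{enum:RLRConeClosure}, and finally \eqref{enum:RLRConeClosure} $\subseteq$ \eqref{enum:RLRraySpan}. The dimension count ($n+1$) will follow once the equalities are in place, since the $n+1$ generating rays $\rhofree, \rho_0, \ldots, \rho_{n-1}$ are visibly linearly independent (the matrix expressing them in the $\epsilon_i$ basis is upper triangular with nonzero diagonal).

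For \eqref{enum:RLRraySpan} $\subseteq$ \eqref{enum:RLRfacets}, I would simply evaluate each functional $\chi_{[j,n]}$ on each generator. One computes $\chi_{[j,n]}(\rhofree) = \chi_{[j,n]}(\epsilon_0)$, which is $1$ if $j=0$ and $0$ otherwise; and $\chi_{[j,n]}(\rho_i) = \chi_{[j,n]}(\epsilon_i + \epsilon_{i+1})$, which telescopes to $0$ unless $j = i+1$, in which case it equals $1$. In all cases the value is $\geq 0$, so every generator lies in the intersection of halfspaces, and hence so does the whole cone they span. For \eqref{enum:RLRfacets} $\subseteq$ \eqref{enum:RLRConeClosure}, the cleanest route is to observe that these are both closed convex cones and to pass to a dual/facet argument: I would check that each $\chi_{[j,n]}$ is a nonnegative functional on $\overline{\BQ(R)}$ (it vanishes on a minimal free resolution's Betti sequence by the alternating-sum identity for exact complexes, truncated appropriately — partial Euler characteristics of a resolution are nonnegative), so $\overline{\BQ(R)} \subseteq$ \eqref{enum:RLRfacets}; combined with the reverse-containment argument below, this forces equality. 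Alternatively, and perhaps more directly, I would show each extremal ray of the polyhedral cone \eqref{enum:RLRfacets} is one of the $\rho_i$ or $\rhofree$: the cone \eqref{enum:RLRfacets} is cut out by $n+1$ halfspaces in an $(n+1)$-dimensional space, so its extremal rays are obtained by setting $n$ of the inequalities to equality, and solving $\chi_{[j,n]}(\mathbf{v}) = 0$ for all $j$ in a size-$n$ subset of $\{0,\ldots,n\}$ yields exactly the rays $\rhofree$ (omit $j=0$) and $\rho_i$ (omit $j=i+1$); this identifies \eqref{enum:RLRfacets} with \eqref{enum:RLRraySpan} outright.

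The substantive containment is \eqref{enum:RLRConeClosure} $\subseteq$ \eqref{enum:RLRraySpan} together with the reverse \eqref{enum:RLRraySpan} $\subseteq \overline{\BQ(R)}$, i.e. that every generating ray is actually approached by genuine Betti sequences. For the ray $\rhofree = \epsilon_0$, take $M = R$ (or any free module), whose Betti sequence is $(1,0,\ldots,0)$. For the rays $\rho_i = \epsilon_i + \epsilon_{i+1}$ with $i < n$, this is where Proposition~\ref{prop:pureoverRLR} enters: I would choose degree sequences $d = (d_0, \ldots, d_s)$ with $s = i+1$ so that $\mathbf{v}(d) \in \VV$ is supported on coordinates $0, \ldots, i+1$, and then let the gaps $d_{j+1} - d_j$ grow so that the Herzog–Kühl values $\frac{1}{\prod_{k\ne j}|d_k - d_j|}$ concentrate their mass on the two middle coordinates $i, i+1$ — explicitly, sending $d_1 - d_0$, \dots, $d_{i-1}-d_{i-2}$ and $d_{i+2} - d_{i+1}$ (were $i+1 < s$, but here $s = i+1$) to infinity while keeping $d_{i+1} - d_i$ bounded makes $\mathbf{v}(d)/\|\mathbf{v}(d)\|$ converge to a multiple of $\rho_i$. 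For the opposite inclusion \eqref{enum:RLRConeClosure} $\subseteq$ \eqref{enum:RLRraySpan}: since $\overline{\BQ(R)} \subseteq$ \eqref{enum:RLRfacets} (shown above via nonnegativity of partial Euler characteristics on resolutions) and \eqref{enum:RLRfacets} $=$ \eqref{enum:RLRraySpan} (shown by the extremal-ray computation), we are done.

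I expect the main obstacle to be the limiting argument in the reverse inclusion \eqref{enum:RLRraySpan} $\subseteq \overline{\BQ(R)}$: one must verify that for each $i$ there genuinely exist admissible degree sequences (with $s = i+1 \leq n$, which constrains $i \leq n-1$, consistent with the statement) whose pure Betti vectors, after rescaling, limit onto $\rho_i$, and that the normalization is handled correctly so the limit lands in the closure rather than escaping to infinity or collapsing to zero. Establishing that the partial Euler characteristics $\chi_{[j,n]}$ are nonnegative on all of $\BQ(R)$ — equivalently, that truncated alternating sums of Betti numbers of a minimal free resolution over a regular local ring are nonnegative — is standard (it follows from the existence of the resolution and rank considerations, or from the fact that $\Omega^j(M)$ has a well-defined rank), but I would state it carefully since it is the one place where regularity (finiteness of projective dimension, so the sum terminates and each truncation bounds a genuine syzygy rank) is genuinely used.
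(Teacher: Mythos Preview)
Your approach is essentially the same as the paper's: both establish (ii)$=$(iii) by pairing the $\rho_i$ against the $\chi_{[j,n]}$, show (i)$\subseteq$(iii) via $\chi_{[j,n]}(b^R(M))=\rank\Omega^j(M)\ge 0$, and prove (ii)$\subseteq$(i) by exhibiting each $\rho_i$ as a limit of rescaled pure Betti vectors from Proposition~\ref{prop:pureoverRLR}. The only substantive difference is your choice of degree sequences: you take $s=i+1$ and let all gaps except $d_{i+1}-d_i$ blow up, whereas the paper fixes $s=n$ and uses $d^{j,t}=(0,t,2t,\dots,jt,jt+1,(j+1)t+1,\dots,(n-1)t+1)$. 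Both choices yield $\rho_i$ in the limit, but the paper's has the advantage that every module in the sequence is Cohen--Macaulay of codimension $n$ (hence finite length), which is what is later invoked in the proof of Theorem~\ref{thm:regmain} to get Cohen--Macaulay modules of a prescribed codimension. Your variant produces modules of codimension $i{+}1$, which varies with $i$; this is harmless for the proposition itself but would need adjusting for that application.

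One small slip: in your list of gaps to send to infinity you stop at $d_{i-1}-d_{i-2}$, but you also need $d_i-d_{i-1}\to\infty$; otherwise coordinate $i-1$ does not die in the limit (its product $\prod_{k\ne i-1}|d_k-d_{i-1}|$ would have the same order in $t$ as the products for coordinates $i$ and $i+1$). With that correction your limiting computation goes through exactly as in the paper.
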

\begin{proof}
The work here lies in showing that \eqref{enum:RLRraySpan} 
is contained in \eqref{enum:RLRConeClosure}; 
this is where we use a limiting argument. 
We first verify the straightforward containments. 
The rays of \eqref{enum:RLRraySpan} satisfy the 
inequalities of \eqref{enum:RLRfacets} because
\begin{equation*}\label{eqn:FunctionalsOnRhos}
\chi_{[j,n]}(\rho_i)  = \begin{cases}
0&\text{if } j\ne i+1,\\
1&\text{if } j=i+1.
\end{cases}
\end{equation*}
Conversely, if $\mathbf{v}\in \VV$ satisfies all of the inequalities, 
then we can write
$
\mathbf{v}= \sum_{i=-1}^{n-1}
\chi_{[i+1,n]}(\mathbf{v})\cdot\rho_{i}, 
$ 
which lies in \eqref{enum:RLRraySpan}.  So we have shown the
equivalence of \eqref{enum:RLRraySpan} and \eqref{enum:RLRfacets}.

To see that the functionals of \eqref{enum:RLRfacets} 
are nonnegative on $\overline{\BQ(R)}$,
it suffices to consider a point of the form $b^R(M)$.  In this
case, $\chi_{[i,n]}(b^R(M)) = \rank \Omega^i(M)$ for $i\geq 0$. 
This implies that $\BQ(R)$ lies in \eqref{enum:RLRfacets}, 
and hence so does its closure.

It thus suffices to check that the rays $\rho_i$
in~\eqref{enum:RLRraySpan} belong to $\overline{\BQ(R)}$.  Since
$\rhofree=\beta(R^1)$, we have $\rhofree \in \BQ(R)$.  To show that
$\rho_j\in \overline{\BQ(R)}$ for $j\geq 0$, we use a limiting
argument.  Such an argument is necessary because the vectors $\rho_j$
do not belong to $\BQ(R)$ due to their non-minimal structure (at least
when $j>0$).  Adopt the notation of \S\ref{sec:pass:pure} and define
$\mathbf{v}_j(d)$ to be the unique scalar multiple of $\mathbf{v}(d)$
such that $\mathbf{v}(d)_j=1$.  Based on the formula for
$\mathbf{v}(d)$ from Proposition~\ref{prop:pureoverRLR}, view
$\mathbf{v}_j$ as a map from $\ZZ^{n+1}\to \VV$ (with poles) defined
by the formula
\[
\mathbf{v}_j(d_0,\dots,d_n)=\left(\frac{\prod_{i\ne j}
|d_i-d_j|}{\prod_{i\ne 0} |d_i-d_0|}, \frac{\prod_{i\ne j}
|d_i-d_j|}{\prod_{i\ne 1} |d_i-d_1|}, \dots, \frac{\prod_{i\ne j}
|d_i-d_j|}{\prod_{i\ne n} |d_i-d_n|}\right)\in \VV.
\]

And now for the crucial choice, 
which is explored further in Example~\ref{ex:djk}. 
For each $j$, consider the sequence $\{ d^{j,t}\}_{t\geq0}$ defined by
$d^{j,t}:=(0, t, 2t, \ldots, jt, jt+1,(j+1)t+1, \ldots, (n-1)t+1)$.
In other words, 
\[
d^{j,t}_k=\begin{cases}
kt &\text{if $k\leq j$,}\\
(k-1)t+1 &\text{if $k>j$.} 
\end{cases}
\]
We claim that
$
\rho_j = \lim_{t\to \infty} \mathbf{v}_j(d^{j,t}). 
$ 
This would imply, by Proposition~\ref{prop:pureoverRLR}, that
$\rho_i\in \overline{\BQ(R)}$, thus completing the proof.
To prove this claim, we observe that the $j$th coordinate function of
$\mathbf{v}_j$ equals $1$ and $\mathbf{v}_j(d)$ lies in the hyperplane
defined by $\chi_{[0,n]}=0$.  So it suffices to prove that the
$\ell$th coordinate function of $\mathbf{v_j}$ goes to $0$ for all
$\ell\ne j,j+1$.  We directly compute
\begin{align*}
  \lim_{t\to \infty} \mathbf{v}_j(d^{j,t})_{\ell}&=\lim_{t\to
    \infty}\frac{\prod_{i\ne j} |d^{j,t}_i-d^{j,t}_j|}{\prod_{i\ne
      \ell} |d^{j,t}_i-d^{j,t}_\ell|}=\lim_{t\to
    \infty}\frac{O(t^{n-1})}{O(t^{n})} =0. \qedhere
\end{align*} 
\end{proof}

\begin{example}\label{ex:djk}
If $n = 4$, then 
$d^{1,t} = (0,t,t+1,2t+1,3t+1)$.
Over $S = \Bbbk [x_1,\dots,x_4]$ with the standard grading, 
this degree sequence corresponds to the Betti diagram 
\[
\beta^S(M(d^{1,t})) = 
\begin{bmatrix}
\beta_0^{1,t} & - & - & - & - \\
\vdots & \vdots & \vdots & \vdots & \vdots\\
- & \beta_1^{1,t} & \beta_2^{1,t} & - & - \\
\vdots & \vdots & \vdots & \vdots & \vdots \\
- & - & - & \beta_3^{1,t} & - \\
\vdots & \vdots & \vdots & \vdots & \vdots \\
- & - & - & - & \beta_4^{1,t}
\end{bmatrix}
\hspace{-.8cm}
\mbox{
\scriptsize
\begin{tabular}{l}
	$\left.\begin{tabular}{l}
	 \ \\ \ \\ \ 
	\end{tabular}\right\}$ 
	\\
	 $\left.\begin{tabular}{l}
	\ \\ \ \\ \ 
	\end{tabular}\right\}$ 
	\\ 
	 $\left.\begin{tabular}{l}
	\ \\ \ \\ \ 
	\end{tabular}\right\}$ 
	\\
	 $\begin{tabular}{l}
	\ \\ 
	\end{tabular}$ 
\end{tabular}
}
\hspace{-.5cm}
\mbox{
\begin{tabular}{l}
\ \vspace{.2cm} \\
$t-1$ rows \\ \vspace{.15cm} \\
$t-1$ rows \\ \vspace{.15cm} \\
$t-1$ rows \\ \\
\ 
\end{tabular}
}
\]
where there are gaps of $t-3$ rows of zeroes between the various 
nonzero entries.
Notice that as $t\to\infty$, this Betti diagram gets longer.  
It is thus necessary to consider the total Betti 
numbers $\beta_{i}$ (i.e., to forget about the individual graded Betti numbers 
$\beta_{i,j}$) before it makes sense to 
consider a limit. 
\end{example}

%
\begin{proof}[Proof of Theorem~\ref{thm:regmain}]
First we show that \eqref{thm:regmain:mod} implies \eqref{thm:regmain:sum}.
Let $M$ be any module of depth $d$ such that $b^R(M)=\lambda \mathbf{v}$.  Since $\chi_{[i,n]}(b^R(M))= \rank \Omega^i(M)$ for $i\geq 0$, the Auslander--Buchsbaum formula
implies that this is strictly positive for $i=1, \dots, n-d$ and $0$ for $i>n-d$.  The proof of Proposition~\ref{prop:moredetailed} then shows
that $b^R(M)$ has the desired form.

Next we show that \eqref{thm:regmain:sum} implies \eqref{thm:regmain:mod}.
If there exists any $M$ such that $b^R(M)=\mathbf{v}$, then the Auslander--Buchsbaum formula implies
that $M$ has depth $d$.  It thus suffices to produce a module $M$ with the desired Betti sequence.  
We may also assume that the coefficient  $a_{-1}$ of $\rhofree$ equals $0$.

 Let $C$ denote the cone spanned by $\rho_0, \dots, \rho_{n-d-1}$, so that 
  $\mathbf{v}$ now belongs to the interior of $C$.
  The proof of Proposition~\ref{prop:moredetailed} illustrates that 
  for each $i=0, \dots, n-d-1$, we can construct $\rho_i$ as the limit
  of Betti sequences of Cohen--Macaulay modules of codimension $n-d$.  
  Since we can construct every extremal ray of $C$ via such a sequence, 
  it follows that every interior point of $C$ can be written as a 
  $\QQ$-convex combination of the Betti sequences of 
  Cohen--Macaulay $R$-modules of codimension $n-d$. In particular,
  $\mathbf{v}$ has this property, and hence $\mathbf{v}\in \BQ(R)$, as
  desired.  This construction also implies the final sentence of the
  theorem, as we have written $\mathbf{v}$ as the sum of Betti
  sequences of Cohen--Macaulay modules of codimension $n-d$. 
\end{proof}

%

\begin{example}[Oscillation of Betti numbers]\label{ex:oscillate}
Let $n=\dim R$ be congruent to $1$ mod $3$. Let $0 < \delta \ll 1$ be a
rational number and set
\[
a_i':=\begin{cases}
0&\text{if } i=-1,\\
1-\frac{\delta}{2} & \text{if } i \ge 0 \text{ and } i \equiv 0 \pmod 3,\\
\frac{\delta}{2} & \text{if } i \ge 0 \text{ and } i \equiv \pm 1 \pmod 3.
\end{cases}
\]
Let $\mathbf{v}':=\sum_{i} a_i'\rho_i$, so that the entries of 
$\mathbf{v}'$ oscillate between $1$ and $\delta$. 
Then there exists a finite length $R$-module
$N$ such that $b^R(N)$ is a scalar multiple of $\mathbf{v}'$. See
Figure~\ref{fig:v}.
\end{example}

\begin{remark}
For a finite length module, the Buchsbaum--Eisenbud--Horrocks Rank Conjecture proposes that 
$b_i(M) \geq \binom{n}{i}$ for $i=0, 1, \dots, n.$ 
It is natural to seek a sharper lower bound $B_i$ that depends on the number 
of generators of $M$ and the dimension of the socle of $M$. 
For $B_1$ we may set
$B_1(b_0,b_n):=b_0-1+n$, and then $b_1\geq B_1(b_0,b_n)$; something similar holds for $B_{n-1}$.  However, Theorem~\ref{thm:regmain}
implies that when $i\ne 1,n-1$ there is no such linear bound. This follows immediately from the fact
that, for any $0< \delta \ll 1$, there is a resolution with shape $(1,1+\frac{\delta}{2},\delta, \dots, \delta,1+\frac{\delta}{2}, 1)$.
\end{remark}

\begin{question}
Are there nonlinear functions $B_i(b_0,b_n)$ such that $b_i(M)\geq B_i(b_0(M),b_n(M))$ for all finite length modules $M$?
\end{question}

\begin{remark}[The graded/local comparison]\label{rmk:comparison}
If 
$S=\Bbbk[x_1, \dots, x_n]$ (with the standard grading) and
$R=\Bbbk[x_1, \dots, x_n]_{(x_1, \dots, x_n)}$,
then there is a map 
$\BQ(S)\to \BQ(R)$ obtained by ``forgetting the grading'' and
localizing.  
Theorem~\ref{thm:regmain} implies that this map is surjective. 
It would be interesting to understand if a similar statement is true 
if we replace $S$ by a more general graded ring.
\end{remark}

\section{Betti sequences over hypersurface rings I:  the cone $\BQtot$}
\label{sec:cone:all:hyp}

We say that $Q$ is a \defi{hypersurface ring} of a regular local ring 
$(R, \fm)$ if $Q = R/\<f\>$ for some nonzerodivisor $f\in R$. 
To avoid trivialities, we assume that $f \in \fm^2$. 
Let $n := \dim R$ and $d := {\rm ord}(f)$, i.e., the unique
integer $d$ such that $f \in \fm^d \minus \fm^{d+1}$. 
The following result is the basis for the ``standard construction.'' 
See~\cite{shamash}, \cite{eisenbud-ci}*{\S 7}, or \cite{avramov-notes} 
for more details.

\begin{theorem}[Eisenbud, Shamash] \label{thm:standardconstruction} 
  Given a $Q$-module $M$, let ${\bf F}_\bullet \to M$ be its minimal
  free resolution over $R$. Then there are maps $s_k \colon {\bf F}_\bullet
  \to {\bf F}_{\bullet + 2k-1}$ for $k \ge 0$ such that
  \begin{enumerate}[\rm (i)]
  \item $s_0$ is the differential of ${\bf F}_\bullet$.
  \item \label{item:firsthomotopy} $s_0s_1 + s_1s_0$ is multiplication
    by $f$.
  \item \label{item:higherhomotopy} $\sum_{i=0}^k s_i
    s_{k-i} = 0$ for all $k > 1$. 
  \end{enumerate}
\end{theorem}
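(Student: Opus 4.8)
The plan is to construct the homotopies $s_k$ recursively in $k$, converting the target quadratic identity $\sum_{i=0}^k s_i s_{k-i}=0$ at each stage into a homotopy-lifting problem for the resolution $\mathbf{F}_\bullet$. The engine is the comparison/lifting property of projective resolutions; the rest is bookkeeping.

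First I would handle the small $k$. Take $s_0\defeq d$, the differential of $\mathbf{F}_\bullet$, so that $s_0 s_0=d^2=0$ and the $k=0$ instance is automatic. For $k=1$: since $M$ is a module over $Q=R/\langle f\rangle$ we have $fM=0$, so multiplication by $f$ on $\mathbf{F}_\bullet$ is a chain endomorphism lifting the zero map $M\to M$ along the augmentation $\mathbf{F}_0\to M$; the zero endomorphism also lifts $0\colon M\to M$, and two lifts of the same module map along a projective resolution are chain homotopic. This produces $s_1$ of homological degree $+1$ with $s_0 s_1+s_1 s_0=f\cdot\id$, which is relation (ii).

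Now fix $k\ge2$ and suppose $s_0,\dots,s_{k-1}$ have been built, with $s_i\colon\mathbf{F}_\bullet\to\mathbf{F}_{\bullet+2i-1}$, satisfying all instances of (ii) and (iii) with index $<k$. Set $g\defeq\sum_{i=1}^{k-1}s_i s_{k-i}$, a map $\mathbf{F}_\bullet\to\mathbf{F}_{\bullet+2k-2}$ built only from the data already in hand. Finding $s_k$ reduces to finding $h\colon\mathbf{F}_\bullet\to\mathbf{F}_{\bullet+2k-1}$ with $s_0 h+h s_0=-g$, for then $s_k\defeq h$ gives $\sum_{i=0}^k s_i s_{k-i}=(s_0 h+h s_0)+g=0$. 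Such an $h$ exists provided $g$ (equivalently $-g$) is a chain map, i.e. $s_0 g=g s_0$: a chain endomorphism of $\mathbf{F}_\bullet$ of strictly positive homological degree $2k-2$ induces the zero map on homology (the homology of $\mathbf{F}_\bullet$ sits only in homological degree $0$) and is therefore null homotopic, by the standard degreewise lifting that uses projectivity of each $\mathbf{F}_i$ together with exactness of $\mathbf{F}_\bullet$ in positive degrees.

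So the crux is the identity $s_0 g=g s_0$, which I would extract from a formal-parameter computation. Work in the ordinary graded algebra $\operatorname{End}_R(\mathbf{F})[t]$ (no Koszul signs, as this is plain composition of endomorphisms) and put $D\defeq\sum_{i=0}^{k-1}s_i t^i$. The inductive hypothesis says the coefficient of $t^j$ in $D^2$ is $0$ for $j=0$ and for $2\le j\le k-1$, is $f\cdot\id$ for $j=1$, and is $g$ for $j=k$. Comparing the coefficients of $t^k$ on the two sides of the trivial associativity identity $D\cdot D^2=D^2\cdot D$, the left-hand side is $s_0 g+s_{k-1}(f\cdot\id)$ and the right-hand side is $g s_0+(f\cdot\id)s_{k-1}$; since $f\cdot\id$ is central the two terms involving $f$ agree, leaving $s_0 g=g s_0$. (Equivalently one expands $s_0 g-g s_0=\sum_{i=1}^{k-1}(s_0 s_i s_{k-i}-s_i s_{k-i}s_0)$ and telescopes using the relations of lower index.) This closes the induction and builds every $s_k$. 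I expect this last bit of bookkeeping, identifying exactly which terms in the $t^k$-coefficient survive, to be the one genuine obstacle; the base cases and the null-homotopy lifting are routine. I would also note that minimality of $\mathbf{F}_\bullet$ is not used anywhere in the construction: any free (indeed projective) resolution of $M$ would do, and minimality matters only later, when the Betti numbers of the resulting complex are read off via the map $\Phi$.
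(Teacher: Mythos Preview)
The paper does not supply its own proof of this theorem; it is quoted as a known result of Eisenbud and Shamash, with a pointer to \cite{shamash}, \cite{eisenbud-ci}*{\S7}, and \cite{avramov-notes} for details. So there is nothing to compare against in the paper itself.

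Your argument is correct and is essentially the classical Shamash construction found in those references. The inductive scheme---build $s_1$ as a null-homotopy for multiplication by $f$, then at stage $k$ show that $g=\sum_{i=1}^{k-1}s_is_{k-i}$ commutes with $s_0$ and hence is null-homotopic---is exactly what is done there. Your formal-variable computation with $D=\sum_{i<k}s_it^i$ and the comparison of $t^k$-coefficients in $D\cdot D^2=D^2\cdot D$ is a clean way to package the telescoping, and your remark that minimality of $\mathbf{F}_\bullet$ plays no role in the construction is accurate. One small expository point: when you appeal to null-homotopy of the positive-degree chain map $g$, it is worth being explicit (as you are in the parenthetical) that the lift $h_0\colon \mathbf{F}_0\to \mathbf{F}_{2k-1}$ exists because $d\,g_0=0$ and $\mathbf{F}_\bullet$ is exact in degree $2k-2>0$; this is the only place where the argument differs from the usual degree-zero comparison lemma.
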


We note that if $R$ and $Q$ are graded local rings, then the maps $s_k$ can
be chosen to be homogeneous. 
Using the $s_k$, we may form a new complex ${\bf F}'_\bullet$
with terms 
\[
{\bf F}'_i = \bigoplus_{j \ge 0} {\bf F}_{i-2j} \otimes_R Q
\]
and with differentials given by taking the sum of the maps 
\begin{align*} {\bf F}_i \otimes_R Q \xrightarrow{(s_0, s_1, s_2,
    \dots)} ({\bf F}_{i-1} \oplus {\bf F}_{i-3} \oplus {\bf F}_{i-5}
  \oplus \cdots) \otimes_R Q.
\end{align*}
Then ${\bf F}'_\bullet \to M$ is a $Q$-free resolution which need not
be minimal.

With $\WW=\prod_{i=0}^\infty \QQ$ and $\epsilon_i\in \WW$ the $i$th
coordinate vector, we define $\Phi \colon \WW \to \WW$ by
\[
\Phi(w_0, w_1, \dots) := 
(w_0, w_1, w_0+w_2, w_1+w_3, w_0+w_2+w_4, \dots).
\]
In other words, the $\ell$th coordinate function of $\Phi$ is given by
\[
\Phi_{\ell}(w_0,w_1,\dots)=\begin{cases}
\sum_{i=0}^{\frac{\ell}{2}} w_{2i} & \text{if $\ell$ is even,}\\
 \vspace{-.3cm}\\
\sum_{i=0}^{\frac{\ell-1}{2}} w_{2i+1} & \text{if $\ell$ is odd.}
\end{cases}
\]
As in Section~\ref{sec:cone:reg}, let $\rhofree:=\epsilon_0$ and 
$\rho_i:=\epsilon_i+\epsilon_{i+1}$ for $i\geq 0$. 

Free resolutions over a hypersurface ring can be infinite in length,
but they are periodic after $n$ steps~\cite{eisenbud-ci}*{Corollary~6.2}, so that $b^Q_i(M)=b^Q_{i+1}(M)$ for all $i\geq
n$~\cite{eisenbud-ci}*{Proposition~5.3}.  Thus, if we seek to describe
the cone of Betti sequences in the hypersurface case, it is necessary
to include some rays with infinite support. We define
\[
\tau^\infty_{i}:=\sum_{j=i}^\infty \epsilon_j\in \WW
\]
and note that $\tau^\infty_i=\Phi(\rho_i)$.  
The rays $\tau^\infty_{n-2}$ and $\tau^\infty_{n-1}$ 
will be especially important for us.

We now give a precise description of the 
total hypersurface cone $\BQtot$ from Definition~\ref{defn:Rtot}.  

\begin{proposition}\label{prop:moreDetailedHyper}
The following three $(n+1)$-dimensional cones in $\WW$ coincide: 
\begin{enumerate}[\rm (i)]\newcounter{savenum}
\item \label{item:full hyp cone} 
	The total hypersurface cone ${\BQtot}$. 
\item \label{item:hyperextremal} 
	The cone spanned by the rays 
	$\QQ_{\geq 0}\langle \rhofree, \rho_0, \dots, \rho_{n-2}, 
	\tau^\infty_{n-2}, \tau^\infty_{n-1}\rangle$.
\item \label{item:hyperfacets} 
	The cone defined by the functionals 
  \[
  \begin{cases}
    \chi_{[i,j]}\geq 0 & \text{ for all } i\leq j \le n \text{ with }
    i-j \text{ even},\\ 
    \chi_{[i,i+1]}=0 & \text{ for all } i\geq n, \text{ and}\\
    \chi_{[n-1,n]}\geq 0.\\
  \end{cases}
  \]
\end{enumerate}
\end{proposition}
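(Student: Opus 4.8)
The plan is to establish the three equalities by a mix of direct linear algebra (for \eqref{item:hyperextremal} $=$ \eqref{item:hyperfacets}) and asymptotic/limiting arguments (for \eqref{item:full hyp cone} $=$ \eqref{item:hyperextremal}), mirroring the structure of the proof of Proposition~\ref{prop:moredetailed}. First I would record the action of the functionals on the proposed extremal rays: $\chi_{[i,j]}(\rho_k)$ is $0$ unless $k = i-1$ or $k=j$ (with appropriate signs/parity accounting), $\chi_{[i,i+1]}(\tau^\infty_k) = 0$ for $i \geq k$, and $\chi_{[n-1,n]}(\tau^\infty_{n-2}) = 0$ while $\chi_{[n-1,n]}(\tau^\infty_{n-1}) = 1$. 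This shows the rays of \eqref{item:hyperextremal} satisfy all the inequalities/equalities of \eqref{item:hyperfacets}, giving one containment. For the reverse, given $\mathbf{w}$ satisfying all the functional conditions, the equalities $\chi_{[i,i+1]}(\mathbf{w}) = 0$ for $i \geq n$ force $\mathbf{w}$ to be eventually constant, so $\mathbf{w}$ lives in an $(n+1)$-dimensional subspace; on that subspace I would exhibit an explicit expansion $\mathbf{w} = a_{-1}\rhofree + \sum_{i=0}^{n-2} a_i \rho_i + b\,\tau^\infty_{n-2} + c\,\tau^\infty_{n-1}$ with the coefficients given by partial Euler characteristics of $\mathbf{w}$, and check (using the remaining $\chi_{[n-1,n]} \geq 0$ and the even-parity $\chi_{[i,j]} \geq 0$ conditions) that all coefficients are nonnegative. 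This is a finite, if slightly fiddly, computation and settles \eqref{item:hyperextremal} $=$ \eqref{item:hyperfacets}.

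Next, for \eqref{item:full hyp cone} $\subseteq$ \eqref{item:hyperfacets}: it suffices to check the functionals are nonnegative (resp.\ zero) on every point $b^Q(M)$ for $Q = R/\<f\>$ a hypersurface ring and then pass to the closure. Here I would use the standard construction (Theorem~\ref{thm:standardconstruction}): $b^Q(M)$ is obtained from a genuine minimal $Q$-resolution, which after $n$ steps is periodic, giving $\chi_{[i,i+1]}(b^Q(M)) = 0$ for $i \geq n$ (rank considerations / periodicity as in \cite{eisenbud-ci}). For the inequalities $\chi_{[i,j]}(b^Q(M)) \geq 0$ with $i-j$ even and $j \leq n$, and $\chi_{[n-1,n]}(b^Q(M)) \geq 0$, I would interpret $\chi_{[i,j]}$ applied to a Betti sequence of an actual complex in terms of ranks of images/kernels of the differentials — partial alternating sums of Betti numbers over an initial segment of a minimal resolution are ranks of syzygy-type modules (this is exactly the mechanism used in Proposition~\ref{prop:moredetailed}, where $\chi_{[i,n]}(b^R(M)) = \rank\Omega^i(M)$), and hence nonnegative. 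One must be slightly careful that the relevant "syzygies" here make sense over the non-regular ring $Q$; I would phrase it as: truncating a minimal $Q$-free complex and taking the image of a differential, whose rank is the partial Euler characteristic, which is $\geq 0$. The parity restriction on $\chi_{[i,j]}$ corresponds to truncating at both ends consistently.

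The main work, and the expected obstacle, is the reverse containment \eqref{item:hyperextremal} $\subseteq$ \eqref{item:full hyp cone}, i.e.\ realizing each extremal ray as a limit of Betti sequences of modules over (varying) hypersurface rings. The ray $\rhofree$ is $b^Q(Q)$, so immediate. For $\rho_i$ with $0 \leq i \leq n-2$: these already live in $\overline{\BQ(R)}$ by Proposition~\ref{prop:moredetailed}, realized as limits of Betti sequences of Cohen--Macaulay $R$-modules of positive codimension; such modules are modules over $R/\<f\>$ for suitable $f$ killing them (or one reduces a resolution modulo a regular element), and the resolution stays minimal and finite (length $\leq n-1$, since codimension is at most... here one needs the module to be a $Q$-module, so rank $0$, which the Cohen--Macaulay modules from Proposition~\ref{prop:pureoverRLR} of codimension $s \geq 1$ indeed are) — so $\rho_i \in \overline{\BQ(Q)} \subseteq \BQtot$ for an appropriate $Q$. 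The genuinely hypersurface-specific rays are $\tau^\infty_{n-2}$ and $\tau^\infty_{n-1}$: since $\tau^\infty_i = \Phi(\rho_i)$, I would take the rank-$0$ modules $M_t$ realizing $\rho_{n-2}$ (resp.\ $\rho_{n-1}$) as a limit over $R$, view them as modules over hypersurface rings $R/\<f\>$ with $\mathrm{ord}(f)$ large, and apply the standard construction: $b^{R/\<f\>}(G_\bullet) = \Phi(b^R(M_t))$, which converges to $\Phi(\rho_{n-2}) = \tau^\infty_{n-2}$ (resp.\ $\tau^\infty_{n-1}$). The subtlety — and the crux of the argument — is that the standard-construction resolution $G_\bullet$ is generally non-minimal, so $b^{R/\<f\>}(G_\bullet)$ need not equal $b^{R/\<f\>}(M_t)$; one must argue that by choosing $f$ of sufficiently high order the standard construction is "asymptotically minimal," i.e.\ the cancellation in passing to the minimal resolution is negligible in the limit. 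This is where I expect to invoke \eqref{eqn:equals asymptotic} / Remark~\ref{rmk:equals asymptotic}: realize $\BQtot$ as $\lim_{t\to\infty}\BQ(R/\<f_t\>)$, so it is enough that $\Phi(b^R(M))$ lies in $\BQtot$ whenever $M$ has rank $0$ over $R$, and then a double limit (over the modules and over the order of $f$) produces $\tau^\infty_{n-2}$ and $\tau^\infty_{n-1}$. Combining: \eqref{item:hyperextremal} $\subseteq \BQtot =$ \eqref{item:full hyp cone}, and with the two containments above all three cones coincide; the dimension count ($n+1$ extremal rays in the subspace cut out by $\chi_{[i,i+1]} = 0$ for $i \geq n$, which has dimension $n+1$... actually $n+2$ rays spanning an $(n+1)$-dimensional cone, one linear relation among them) finishes the statement.
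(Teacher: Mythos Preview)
Your overall architecture matches the paper's, and your treatment of $\tau^\infty_{n-2},\tau^\infty_{n-1}$ via $\Phi$ and a double limit is on target (the paper sharpens this via Lemma~\ref{lem:phiexact}: for $f$ of sufficiently high order the standard construction is \emph{exactly} minimal, so $\Phi(b^R(M)) = b^{R/\langle f\rangle}(M)$ on the nose). There are, however, two genuine gaps.

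First, your argument that $\rho_i \in \BQtot$ for $0\le i\le n-2$ does not work as stated. If $M$ is a rank-zero $R$-module annihilated by $f$, viewing $M$ as a $Q=R/\langle f\rangle$-module does \emph{not} preserve the resolution: the minimal $R$-resolution consists of free $R$-modules, and $F_\bullet\otimes_R Q$ is not acyclic (indeed $\Tor_1^R(M,Q)\cong M$ since $fM=0$). Such $M$ typically has infinite projective dimension over $Q$, and Lemma~\ref{lem:phiexact} shows its $Q$-Betti sequence limits to $\Phi(\rho_i)=\tau^\infty_i$, not $\rho_i$. Your parenthetical ``reduce modulo a regular element'' points in the right direction but requires an $M$-regular element, which the depth-zero modules from the proof of Proposition~\ref{prop:moredetailed} do not have. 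The paper instead fixes any $f$, chooses a regular local subring $R'\subseteq R/\langle f\rangle$ of dimension $n-1$, and uses flatness to get $b^{R/\langle f\rangle}(M'\otimes_{R'}R/\langle f\rangle)=b^{R'}(M')$; then Proposition~\ref{prop:moredetailed} applied to $R'$ supplies each $\rho_i$.

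Second, your verification of $\chi_{[n-1,n]}(b^Q(M))\ge 0$ is incomplete: this is an \emph{odd}-length partial Euler characteristic, so the rank-of-syzygy interpretation gives $\rank\Omega^{n-1}(M)-\rank\Omega^{n+1}(M)$, which has no evident sign. The paper handles this separately, observing that $\chi_{[n-1,n]}(b^Q(M))=\mu(\Omega^{n-1}(M))-\mu(\Omega^n(M))$ counts the free summands of the maximal Cohen--Macaulay module $\Omega^{n-1}(M)$, since MCM modules without free summands over a hypersurface have periodic minimal resolutions~\cite{eisenbud-ci}*{Theorem~6.1(ii)}. (A smaller issue: for \eqref{item:hyperfacets}$\subseteq$\eqref{item:hyperextremal} the cone is not simplicial, so a single ``explicit expansion with nonnegative coefficients'' is not automatic; the paper instead enumerates candidate boundary facets via the unique linear relation among the $n+2$ rays and matches each to one of the listed functionals.)
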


\begin{proof}
  It is straightforward to check that the extremal rays satisfy the
  desired facet inequalities, and hence we have
  \eqref{item:hyperextremal}$ \subseteq $\eqref{item:hyperfacets}.
  The reverse inclusion is more difficult than the analogous statement
  in Proposition~\ref{prop:moredetailed} because
  here~\eqref{item:hyperextremal} is not a simplicial cone. We first
  identify the boundary facets, and then show that for each boundary
  facet, one of the listed functionals vanishes on it.

  To do this, we use that these rays satisfy a unique linear
  dependence relation.  When $n$ is even, the relation is given by
\[
\tau^{\infty}_{n-1}+\rho_{n-3}+\dots +\rho_{-1}=\tau^{\infty}_{n-2}+\rho_{n-4}+\dots +\rho_0,
\]
and a similar relation holds when $n$ is odd.
We now consider subsets of these rays 
of size $n$, which we index by the two rays that we omit from the collection. 
These fall into three categories:
\begin{enumerate}[\rm (a)]
\item \label{item:functionalA} $\{ \rho_i,\rho_j\}$ with $i<j$,
\item \label{item:functionalB} $\{\rho_i,\tau^{\infty}_j\}$, and 
\item \label{item:functionalC}
  $\{\tau^{\infty}_{n-2},\tau^{\infty}_{n-1}\}$.
\end{enumerate}
Any such collection is linearly independent, and hence spans a unique
hyperplane of the subspace 
\begin{align}\label{eq:subspace}
\{ w \in \WW \mid w_{n+i} = w_n \text{ for
  all } i\geq0 \}.
\end{align}
As such, there is a unique up to scalar functional vanishing on each
collection; we write $F_{i,j}$ for the corresponding functional in
type \eqref{item:functionalA}, $G_{i,j}$ for type
\eqref{item:functionalB}, and $H$ for type \eqref{item:functionalC}.
In order to show the desired containment, we compute these functionals
and determine which correspond to boundary facets
of~\eqref{item:hyperextremal} by evaluating the functionals on their
corresponding omitted rays.

To begin, note that if $j<n-2$, then $F_{i,j}=\chi_{[i+1,j]}$. 
This evaluates to $1$ on $\rho_i$ for $i\geq 0$, 
$(-1)^{j-(i+1)}$ on $\rho_j$, and $0$ on the remaining rays. 
Hence it determines a boundary facet if and only if $i+1$ and $j$ 
have the same parity.
In addition, for any $i<n-2$, $F_{i,n-2} = \chi_{[n-1,n]}$, 
which is the last functional in~\eqref{item:hyperfacets}. 

Next, observe that $G_{i,n-2}=\chi_{[i+1,n]}$.  If $i<n-2$, this
evaluates to $1$ on $\rho_i$ for $i\geq 0$, $(-1)^{n-(i+1)}$ on
$\tau_{n-2}$, and $0$ on the remaining rays.  Hence in this case, it
yields a boundary facet if and only if $i+1$ and $n$ have the same
parity.  Similarly, $G_{i,n-1}=\chi_{[i+1,n-1]}$ if and only if
$i<n-2$, which is a boundary facet only if $n$ and $i$ have the same
parity.

Finally, we compute that $G_{n-2,n-2}=\chi_{[n-1,n]}$,
$G_{n-2,n-1}=\chi_{[n-1,n]}$, and $H=\chi_{[n,n]}$, which all appear
in~\eqref{item:hyperfacets}.  As the subspace
description~\eqref{eq:subspace} accounts for the remaining
functionals, we have established the equivalence
of~\eqref{item:hyperextremal} and~\eqref{item:hyperfacets}.

We next show 
that~\eqref{item:full hyp cone}$\subseteq$\eqref{item:hyperfacets}.  
For this it suffices to check that the functionals in~\eqref{item:hyperfacets} 
are nonnegative on points in $\BQ(Q)$, 
where $Q=R/\<f\>$ and $f\in \fm$ is arbitrary.  
We thus reduce to the consideration of a point $\mathbf{w}=b^{Q}(M)$, 
where $M$ is a $Q$-module. In this case, 
\[
\chi_{[i,j]}(b^Q(M))=\frac{1}{e(Q)}\left(e(\Omega^i(M))+(-1)^{i-j}e(\Omega^j(M))\right),
\]
which is certainly nonnegative when $i$ and $j$ have the same parity.
It follows from~\cite{eisenbud-ci}*{Proposition~5.3, Corollary~6.2}
that $\chi_{[i,i+1]}(b^Q(M))=0$ for $i\geq n$.  Thus it remains to
check the inequality $\chi_{[n-1,n]}(b^Q(M))\geq 0$.  Using $\mu(N)$
to denote the minimal number of generators of a module $N$, we have
\[
\chi_{[n-1,n]}(b^Q(M))=\mu(\Omega^{n-1}(M))-\mu(\Omega^{n}(M)).
\]
Both of these syzygy modules are maximal Cohen--Macaulay 
$Q$-modules. The key difference is that $\Omega^{n-1}(M)$ 
might have a free summand, whereas $\Omega^n(M)$ does not. 
Since maximal Cohen--Macaulay modules without free summands 
over hypersurface rings have a periodic resolution 
by~\cite{eisenbud-ci}*{Theorem~6.1(ii)}, it follows that 
$\chi_{[n-1,n]}(b^Q(M))$ computes the number of free summands in
$\Omega^{n-1}(M)$, so it is nonnegative.

To complete the proof, we show
that~\eqref{item:hyperextremal}$\subseteq$\eqref{item:full hyp cone}
by showing that each extremal ray lies in $\BQtot$.  We first show
that $\rho_i$ belongs to $\overline{\BQ(R/\<f\>)}$ for any $f$. Choose
a regular local subring $R'\subseteq R/\<f\>$ of dimension $n-1$ and
an $R'$-module $M'$. Then
$b^{R/\<f\>}(M'\otimes_{R'}R/\<f\>)=b^{R'}(M')$ because $R/\<f\>$ is
finite and flat over $R'$.  In particular,
$\overline{\BQ(R')}\subseteq\overline{\BQ(R/\<f\>)}$.  Since $\rho_i
\in \overline{\BQ(R')}$ by Proposition~\ref{prop:moredetailed}, we
have $\rho_i \in \overline{\BQ(R/\<f\>)}$.

Finally, we must show that $\tau^{\infty}_{n-2}$ and
$\tau^{\infty}_{n-1}$ belong to $\BQtot$. 
This is where the advantage of working with $\BQtot$ becomes clear, 
as it enables a second limiting argument that, roughly speaking, 
makes the standard construction exact.  
The key observation is summarized in Lemma~\ref{lem:phiexact} below.

In fact, we now show the more general statement that $\Phi(\rho_i)\in
\BQtot$ for $i=0,\dots, n-1$.  Fix $i$ and let $d^{i,t}$ be the
sequence of degree sequences defined in the proof of
Proposition~\ref{prop:moredetailed}.  
For each $t$, we choose any polynomial 
$f_t\in \mathfrak m^{d^{i,t}_n-d^{i,t}_0+1}$.  
We now apply Lemma~\ref{lem:phiexact}, 
along with the fact that $\Phi$ is continuous, to conclude that
\begin{align*}
  \tau^{\infty}_i&=\Phi(\rho_i)\\
  &= \Phi \left( \lim_{t\to \infty} b^{R}(M(d^{i,t})\otimes_S R) \right)\\
  &= \lim_{t\to \infty} \Phi  \left( b^{R}(M(d^{i,t})\otimes_S R) \right)\\
  &= \lim_{t\to \infty} b^{R/\<f_t\>}(M(d^{i,t})\otimes_S R).
\end{align*}
Since $b^{R/\<f_t\>}(M(d^{i,t})\otimes_S R) \in \BQtot$ for all $t$,
it follows that the final limit lies in $\BQtot$.
\end{proof}

\begin{remark}\label{rmk:equals asymptotic}
  The proof of Proposition~\ref{prop:moreDetailedHyper} goes through
  if we replace $\BQtot$ by the closure of the limit cone $\lim_{t\to
    \infty} \BQ(R/\<f_t\>)$, illustrating that these two cones are
  equal as well.  This justifies equation~\eqref{eqn:equals
    asymptotic}.
\end{remark}

\begin{lemma}\label{lem:phiexact}
  Let $M$ be an $R$-module that is annihilated by
  $\fm^{N_0}$ and let $f\in \fm^N$ with $N\gg N_0$.  Then
  \[
  \Phi(b^R(M))=b^{R/\<f\>}(M).
  \]
  More specifically, let $d=(d_0,\dots,d_n)$ be a degree sequence, 
  $M(d)\otimes_S R$ be defined as in Proposition~\ref{prop:pureoverRLR}, and 
  $f\in \fm^{d_n-d_0+1}$. Then
  \[
  \Phi(b^R(M(d)\otimes_S R))=b^{R/\<f\>}(M(d)\otimes_S R).
  \]
\end{lemma}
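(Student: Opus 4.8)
The plan is to show that when $f$ lies in a sufficiently high power of $\fm$, the standard construction from Theorem~\ref{thm:standardconstruction} already produces a \emph{minimal} $R/\<f\>$-free resolution, so that its Betti sequence $\Phi(b^R(M))$ coincides with $b^{R/\<f\>}(M)$. Recall that the standard construction's differential on ${\bf F}'_i = \bigoplus_{j\ge 0} {\bf F}_{i-2j}\otimes_R Q$ is assembled from the maps $s_0, s_1, s_2,\dots$, where $s_0$ is the $R$-differential of ${\bf F}_\bullet$ (which has entries in $\fm$ since ${\bf F}_\bullet$ is a minimal $R$-resolution), and $s_k$ for $k\ge 1$ is a homotopy with $s_0 s_1 + s_1 s_0 = \cdot f$, etc. The point is that the entries of the composite construction are minimal as long as every $s_k$ for $k\ge 1$ has entries in $\fm$. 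The entry-in-$\fm$ condition for $s_0$ is automatic; the content is that the higher homotopies $s_k$ can be chosen with entries in $\fm$ when $f\in\fm^N$ for $N$ large.

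First I would make precise the dependence of the homotopies on $f$. Since $M$ is annihilated by $\fm^{N_0}$, the resolution ${\bf F}_\bullet$ has length $\le n$ and its differentials have entries in $\fm$; one builds $s_1$ by solving $s_0 s_1 + s_1 s_0 = \cdot f$ degree by degree, which is possible because multiplication by $f$ is null-homotopic on ${\bf F}_\bullet$ (as $f$ annihilates $M$). The key quantitative observation is that the entries of $s_1$ can be taken in $\fm^{N - c}$, where $c$ depends only on ${\bf F}_\bullet$ (concretely, $c$ is governed by an Artin--Rees-type constant for the ideals generated by the entries of the differentials of ${\bf F}_\bullet$, bounded in terms of $N_0$ and $n$): the equation $\cdot f = s_0 s_1 + s_1 s_0$ expresses $f\in\fm^N$ as a combination of entries of $s_0\in\fm$ times entries of $s_1$, and since ${\bf F}_\bullet$ resolves a module killed by $\fm^{N_0}$ one may lift so that the $s_1$-entries lie in $\fm^{N-1}\subseteq\fm$ once $N\ge 2$. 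Iterating, the relation $\sum_{i=0}^k s_i s_{k-i}=0$ for $k>1$ lets us choose each $s_k$ with entries in $\fm^{\,k(N-1) - c'}$ for a constant $c'$ independent of $N$; hence for $N\gg N_0$ (large enough that $N - 1 - c' \ge 1$) every $s_k$, $k\ge 1$, has entries in $\fm$. Then the differential of ${\bf F}'_\bullet\otimes Q$ has all entries in $\fm_Q$, so ${\bf F}'_\bullet$ is minimal, and $b^{R/\<f\>}(M) = b^Q({\bf F}'_\bullet) = \Phi(b^R(M))$ by the description of the standard construction recalled before the statement.

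For the second, sharper assertion, I would specialize $M = M(d)\otimes_S R$. Here ${\bf F}_\bullet$ is the minimal $R$-resolution obtained by base change from the pure $S$-resolution of $M(d)$ (Proposition~\ref{prop:pureoverRLR}), so the $i$th differential of ${\bf F}_\bullet$ has entries that are forms of degree $d_{i+1}-d_i > 0$ in the regular sequence $x_1,\dots,x_n$, hence lie in $\fm^{d_{i+1}-d_i}$. The total "degree span" of the resolution is $d_n - d_0$, and one checks that $f\in\fm^{d_n-d_0+1}$ forces the first homotopy $s_1$ to have entries of positive degree — since $s_1$ raises homological degree by $1$ and must account for multiplication by a form of degree $d_n-d_0+1$ against differentials spanning total degree $d_n-d_0$, at least one degree of "slack" remains — and similarly every higher $s_k$ has entries in $\fm$; this is exactly the graded bookkeeping that $f$ sits strictly beyond the span of the resolution. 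I expect the main obstacle to be precisely this bookkeeping: controlling uniformly that \emph{all} the higher homotopies $s_k$ ($k\ge 2$), not just $s_1$, have positive-degree (equivalently, $\fm$-valued) entries, using the quadratic relations $\sum_{i=0}^k s_is_{k-i}=0$ inductively. In the general (non-graded) case the analogue is the Artin--Rees estimate on how far into $\fm$ one can push the homotopies, and making the constant $c'$ genuinely independent of $N$ is the delicate point; once that is in hand, the conclusion $\Phi(b^R(M)) = b^{R/\<f\>}(M)$ is immediate from minimality of ${\bf F}'_\bullet$.
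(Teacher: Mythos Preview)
Your proposal and the paper's proof follow exactly the same strategy: show that the standard construction ${\bf F}'_\bullet$ is already a \emph{minimal} $Q$-resolution by arguing that every homotopy $s_k$ ($k\ge 1$) can be chosen with entries in $\fm$, so that $b^{R/\langle f\rangle}(M)=b^Q({\bf F}'_\bullet)=\Phi(b^R(M))$.

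The paper's own argument is extremely terse (four sentences): it notes that only finitely many $s_j$ are nonzero, asserts that relation~(iii) reduces everything to the case $j=1$, and then says that~(ii) forces $s_1$ to have entries in $\fm$ once $f$ lies deep enough in $\fm$. Your write-up supplies the quantitative scaffolding the paper leaves implicit---the Artin--Rees estimate that puts the entries of $s_1$ into $\fm^{N-c}$, and the inductive propagation to the higher $s_k$ using the finiteness of the resolution. So the difference is one of explicitness, not of method: where the paper says ``from~(iii) this reduces to $j=1$'', you actually track the $\fm$-adic orders through the recursion.

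One small caveat on the ``more specifically'' part: your degree-bookkeeping language suggests a graded argument, but $R$ is only local and Proposition~\ref{prop:pureoverRLR} allows $x_1,\dots,x_n$ to map to an arbitrary regular sequence, not necessarily a regular system of parameters. What survives is that the entries of $s_0|_{F_{i+1}}$ lie in $\fm_R^{\,d_{i+1}-d_i}$ (as images of forms of that degree), so the same $\fm$-adic bookkeeping goes through; just be careful not to invoke honest homogeneity of the $s_k$ over $R$.
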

\begin{proof}
  Since $R$ is a regular local ring, the minimal $R$-free resolution
  of $M$ has finite length. So there are only finitely many $j$ such
  that the $s_j$ in Theorem~\ref{thm:standardconstruction} are
  nonzero, and there is some positive integer $P$ such that the matrix
  entries in the minimal $R$-free resolution of $M$ belong to
  $\fm^P$. To conclude, we need to know that the entries of each $s_j$
  belong to the maximal ideal $\fm$. From
  Theorem~\ref{thm:standardconstruction}\eqref{item:higherhomotopy},
  this will be true if it holds for $j=1$, and this in turn is true if
  we set $N_0 = P$ and apply
  Theorem~\ref{thm:standardconstruction}\eqref{item:firsthomotopy}.
\end{proof}

\begin{remark} 
Assume that $n\geq 3$. By \cite{triangulations}*{Lemma~2.4.2}, there are 
exactly two triangulations of the cone $\BQtot$, which we now describe. 
First, we project from $\WW$ onto the first $n+1$ coordinates. 
This does not change the combinatorial structure of the cone.
The hyperplane section of the projection given by 
$\epsilon_{0} + \cdots + \epsilon_{n} = 1$ 
is an $n$-dimensional polytope with vertices 
$\rho_{-1}$, $\frac{1}{2}\rho_0$, $\frac{1}{2}\rho_1$, $\dots$, 
$\frac{1}{2}\rho_{n-2}$, $\frac{1}{3}\tau^\infty_{n-2}$, 
$\frac{1}{2}\tau^\infty_{n-1}$. 

To express the triangulations, let $\Delta_r$ denote the polytope 
generated by all vertices other than $r$. 
If $n$ is odd, then the two triangulations are
  \[
  \{ \Delta_{\rho_i} \mid i \text{ odd}, i\ne n-2 \} \cup \{
  \Delta_{\tau^\infty_{n-1}} \}\quad \text{ or } \quad \{ \Delta_{\rho_i} \mid i
  \text{ even} \} \cup \{\Delta_{\tau^\infty_{n-2}}\}.
  \]
   If $n$ is even, then the two triangulations are
  \[
  \{ \Delta_{\rho_i} \mid i \text{ odd} \} \cup \{
  \Delta_{\tau^\infty_{n-2}} \},\quad \text{ or } \quad \{ \Delta_{\rho_i} \mid i
  \text{ even}, i \ne n-2 \} \cup \{\Delta_{\tau^\infty_{n-1}}\}. \qedhere
  \]
\end{remark}

\section{Betti sequences over hypersurface rings II:  A fixed hypersurface}
\label{sec:cone:one:hyp}

For a regular local ring $(R,\fm)$ and $f\in \fm_R$, the cone $\BQtot$ 
is larger than $\BQ(Q)$ for the hypersurface ring $Q = R/\<f\>$. 
In this section, we seek to make this relationship precise.  
Set $Q:=R/\<f\>$ and $d := {\rm ord}(f)$, 
i.e., $f \in \fm^d \setminus \fm^{d-1}$. We note that $e(Q)=d$.
We define the vectors 
\[
\tau^d_{n-2}:= \left( \tfrac{d-1}{d}\epsilon_{n-2}+\sum_{j=n-1}^\infty \epsilon_j \right)
\qquad \text{ and } \qquad 
\tau^d_{n-1}:=\left( \tfrac{1}{d}\epsilon_{n-2}+\sum_{\ell=n-1}^\infty
    \epsilon_\ell \right). 
\]
We also define the functionals
\[
\xi^d_{[i,j]}:=
\begin{cases}
-\epsilon_j^*+d\chi_{[i,j-1]}&\text{ if } i-j \text{ is odd,}\\
(d-1)\epsilon_j^*+d\chi_{[i,j-1]}&\text{ if } i-j \text{ is even}.\\
\end{cases}
\]





The following proposition gives some partial information about
Conjecture~\ref{conj:hyper:equiv}. 

\begin{proposition}\label{prop:one direction}
  The following two $(n+1)$-dimensional cones in $\WW$ coincide:
\begin{enumerate}[\rm (i)]
\item\label{conj:cone:rays} 
	The cone spanned by the rays 
	$\QQ_{\geq 0}\langle \rhofree, \rho_0, \dots, \rho_{n-2}, 
	\tau^d_{n-2}, \tau^d_{n-1}\rangle$.
\item\label{conj:cone:facets}  
	The cone defined by the functionals 
	\[
	\begin{cases}
	\xi^d_{[i,n]}\geq 0 & \text{for all } 0\leq i \leq n,\\
	\chi_{[i,j]}\geq 0 & \text{for all } i\leq j\leq n \text{ and } i-j \text{ even,}\\
	\chi_{[i,i+1]}=0 & \text{for all } i\geq n,  \text{ and}\\
	\chi_{[n-1,n]}\geq 0. &\\
	\end{cases}
	\]
	\end{enumerate}
Furthermore, this cone contains $\overline{\BQ(Q)}$.
\end{proposition}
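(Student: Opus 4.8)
The plan is to prove the containment $\overline{\BQ(Q)}\subseteq C$, where $C$ denotes the common cone of \eqref{conj:cone:rays} and \eqref{conj:cone:facets} (the equality of these two descriptions being the first part of the proposition, which I would establish separately by a finite‑dimensional convex‑geometry computation mirroring that of Proposition~\ref{prop:moreDetailedHyper}; there the unique linear relation among the $n+2$ rays has the same combinatorial type, and the functionals $\xi^d_{[i,n]}$ are exactly the ones detecting the two rays $\tau^d_{n-2},\tau^d_{n-1}$). For the containment I will use the facet description \eqref{conj:cone:facets}. Since $C$ is an intersection of closed half‑spaces and hyperplanes it is closed, and $\BQ(Q)$ is by definition the convex cone generated by the points $b^Q(M)$; hence it suffices to check $b^Q(M)\in C$ for every finitely generated $Q$‑module $M$. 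Writing $b_k:=b^Q_k(M)$, the inequalities $\chi_{[i,j]}\ge 0$ (for $i\le j\le n$, $i-j$ even), the relations $\chi_{[i,i+1]}=0$ (for $i\ge n$) and $\chi_{[n-1,n]}\ge 0$ are precisely the functionals cutting out $\BQtot$ in Proposition~\ref{prop:moreDetailedHyper}, and $\BQ(Q)\subseteq\BQtot$ by definition, so these already hold on $b^Q(M)$. The only new point is $\xi^d_{[i,n]}(b^Q(M))\ge 0$ for $0\le i\le n$.

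To handle this I would first assemble three facts about the hypersurface ring $Q$, of dimension $n-1$ and multiplicity $e(Q)=d$. First, resolutions over $Q$ are eventually periodic, so $b_k=b_n$ for all $k\ge n$ and $b_n=\mu(\Omega^n(M))=\mu(\Omega^{n+1}(M))$. Second, for $k\ge1$ the syzygy $\Omega^k(M)$ is a nonzero submodule of a free $Q$‑module, hence equidimensional of dimension $n-1$, and it is maximal Cohen--Macaulay for $k\ge n-1$ by the depth lemma; so multiplicity is additive on $0\to\Omega^{k+1}(M)\to Q^{b_k}\to\Omega^k(M)\to 0$, giving $b_k\,d=e(\Omega^k(M))+e(\Omega^{k+1}(M))$, and telescoping this yields $d\,\chi_{[i,n]}(b^Q(M))=e(\Omega^i(M))+(-1)^{n-i}e(\Omega^{n+1}(M))$. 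Third, for any maximal Cohen--Macaulay $Q$‑module $N$ one has $\mu(N)\le e(N)$: after a faithfully flat residue‑field extension, which changes neither Betti numbers nor $e$, pick a minimal reduction $(\underline x)$ of $\fm_Q$ (a system of parameters for $N$), so $\mu(N)=\dim_k N/\fm_Q N\le \ell(N/(\underline x)N)=e((\underline x);N)=e(N)$. In particular $b_n\le e(\Omega^n(M))$ and $b_n\le e(\Omega^{n+1}(M))$.

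The verification of $\xi^d_{[i,n]}\ge 0$ is then a short computation. From $\chi_{[i,n]}=\chi_{[i,n-1]}+(-1)^{n-i}\epsilon_n^*$ one finds $\xi^d_{[i,n]}=d\,\chi_{[i,n]}-\epsilon_n^*$ when $i\equiv n\pmod 2$, and $\xi^d_{[i,n]}=d\,\chi_{[i,n]}+(d-1)\epsilon_n^*$ otherwise. Evaluating on $b^Q(M)$, with $\epsilon_n^*(b^Q(M))=b_n$: if $i\equiv n$ then $\xi^d_{[i,n]}(b^Q(M))=e(\Omega^i(M))+e(\Omega^{n+1}(M))-b_n\ge e(\Omega^i(M))\ge 0$; if $i\not\equiv n$ then $\xi^d_{[i,n]}(b^Q(M))=e(\Omega^i(M))-e(\Omega^{n+1}(M))+(d-1)b_n$, and since $e(\Omega^{n+1}(M))=b_n d-e(\Omega^n(M))\le (d-1)b_n$, this is again $\ge e(\Omega^i(M))\ge 0$. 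Hence $b^Q(M)\in C$ for every $M$, and $\overline{\BQ(Q)}\subseteq C$.

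I expect the main obstacle to be the inequality $\mu(N)\le e(N)$ for maximal Cohen--Macaulay modules in the third fact: it is the only place where the multiplicity $d=e(Q)$ enters essentially — it is what pins the two infinite rays to the values $\tfrac{d-1}{d}$ and $\tfrac1d$ rather than to $1$ as in $\BQtot$ — and it is the step requiring care, since one must first pass to an infinite residue field before invoking a minimal reduction. The remaining ingredients are routine: additivity of Hilbert--Samuel multiplicity, the eventual periodicity of hypersurface resolutions, and the $\chi$‑inequalities already obtained in Proposition~\ref{prop:moreDetailedHyper}.
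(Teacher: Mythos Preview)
Your proposal is correct and follows essentially the same approach as the paper. Both defer the equality \eqref{conj:cone:rays}$=$\eqref{conj:cone:facets} to the computation in Proposition~\ref{prop:moreDetailedHyper}, reduce the containment $\overline{\BQ(Q)}\subseteq C$ to the new inequalities $\xi^d_{[i,n]}\ge 0$, and establish these via additivity of multiplicity along the syzygy exact sequences together with the bound $\mu(N)\le e(N)$ for the maximal Cohen--Macaulay syzygies $\Omega^n(M)$ and $\Omega^{n+1}(M)$; the only cosmetic difference is that the paper truncates the long exact sequence at $\Omega^n$ in the odd-parity case, whereas you always go to $\Omega^{n+1}$ and then substitute $e(\Omega^{n+1})=db_n-e(\Omega^n)$.
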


%
\begin{proof}
One may check that the cones \eqref{conj:cone:rays} and \eqref{conj:cone:facets} coincide by an argument entirely analogous
to that used in the proof of Proposition~\ref{prop:moreDetailedHyper}.
It thus suffices to check that the functionals in~\eqref{conj:cone:facets} 
are satisfied by all points in $\BQ(Q)$. 
By applying Proposition~\ref{prop:moreDetailedHyper}, 
we immediately reduce to the case of showing that $\xi^d_{[i,n]}$ 
is nonnegative on any Betti sequence $b^Q(M)$. 

Fix a finitely generated $Q$-module $M$ and a minimal resolution of $M$:
$0 \leftarrow M \leftarrow Q^{b_0} \leftarrow Q^{b_1} \leftarrow \cdots$. 
To compute $\xi^d_{[i,n]}(b^Q(M))$, we consider the exact sequence
\[
\xymatrix{0&\Omega^i(M)\ar[l]&Q^{b_i}\ar[l]&Q^{b_{i+1}}\ar[l]&\dots \ar[l]&Q^{b_n}\ar[l]&\Omega^{n+1}(M)\ar[l]&0\ar[l]
}.
\]
Assume now that $n-i$ is even and that $i\geq 1$. 
Taking multiplicities, we obtain the equation
\begin{align*}
  e(\Omega^i(M))+e(Q^{b_{i+1}})
  +\dots+ e(Q^{b_{n-1}})+e(\Omega^{n+1}(M)) &=
  e(Q^{b_{i}})+e(Q^{b_{i+2}})+\dots+ e(Q^{b_{n}}),
\end{align*}
which can be rewritten as
\begin{align*}
  e(\Omega^i(M))&= d\chi_{[i,n]} \left( b^Q(M)\right)-e(\Omega^{n+1}(M)).\\
  \intertext{Since $\Omega^{n+1}(M)$ is Cohen--Macaulay, 
  $e\left(\Omega^{n+1}(M)\right)\geq
    \mu\left(\Omega^{n+1}(M)\right)=b_{n+1}^Q(M)=b_{n}(M).$ Hence}
  e(\Omega^i(M))&\leq d\chi_{[i,n]} \left( b^Q(M)\right)-b_{n}(M)
  =\xi^d_{[i,n]}\left( b^Q(M)\right).
\end{align*}
It follows that $\xi^d_{[i,n]}\left( b^Q(M)\right)$ is nonnegative, as desired.

When $n-i$ is odd and $i\geq 1$, essentially the same argument holds,
starting instead from the exact sequence
\[
\xymatrix{0&\Omega^i(M)\ar[l]&Q^{b_i}\ar[l]&Q^{b_{i+1}}\ar[l]&\dots
  \ar[l]&Q^{b_{n-1}}\ar[l]&\Omega^{n}(M)\ar[l]&0\ar[l] }.
\]
The same argument also holds when $i=0$, after one replaces 
$e(\Omega^i(M))$ by the number
\[
e':=\begin{cases}
e(M) & \text{ if } \dim(M)=\dim(Q),\\
0& \text{ otherwise}. 
\end{cases} 
\qedhere
\]
\end{proof}
 
The opposite inclusion also holds when $Q$ has embedding dimension $2$.

\begin{proposition} If $Q$ is a hypersurface ring of embedding
  dimension $2$, then $\BQ(Q)$ satisfies
  Conjecture~\ref{conj:hyper:equiv}.
\end{proposition}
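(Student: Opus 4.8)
The plan is to establish the inclusion opposite to the one furnished by Proposition~\ref{prop:one direction}. Write $Q = R/\langle f\rangle$ with $(R,\fm_R)$ a two-dimensional regular local ring and $d = \operatorname{ord}(f) = e(Q)$; since $\edim Q = 2$ we have $\dim R = 2$ and $d \geq 2$. Proposition~\ref{prop:one direction}, applied with $n = 2$, already gives that $\overline{\BQ(Q)}$ is contained in the cone $C := \QQ_{\geq 0}\langle \rhofree, \rho_0, \tau^d_0, \tau^d_1\rangle$, and Conjecture~\ref{conj:hyper:equiv} asserts equality, with $C$ moreover $(n+1)$-dimensional. The first observation I would make is that when $n = 2$ this spanning set is redundant: $\tau^d_0 - \tau^d_1 = \tfrac{d-1}{d}\epsilon_0 - \tfrac1d\epsilon_0 = \tfrac{d-2}{d}\rhofree$ with $\tfrac{d-2}{d} \geq 0$, so $\tau^d_0 \in \QQ_{\geq 0}\langle \rhofree, \tau^d_1\rangle$ and hence $C = \QQ_{\geq 0}\langle \rhofree, \rho_0, \tau^d_1\rangle$, a simplicial cone on three linearly independent rays. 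So it suffices to produce each of $\rhofree$, $\rho_0$, and $\tau^d_1$ inside $\overline{\BQ(Q)}$.

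The first two rays are immediate: $\rhofree = b^Q(Q)$, and $\rho_0 = b^Q(Q/gQ)$ for any nonzerodivisor $g \in \fm_Q$, one of which exists because $\depth Q = 1$. The ray $\tau^d_1$ is the crux; it is proportional to $(1,d,d,d,\dots)$, so I want a cyclic $Q$-module of infinite projective dimension whose first syzygy is minimally generated by exactly $d$ elements. I would take $M := Q/\fm_Q^{d-1}$; because $f \in \fm_R^d \subseteq \fm_R^{d-1}$, this is the same module as $R/\fm_R^{d-1}$ viewed over $Q$. Then $b^Q_0(M) = 1$, and $\Omega^1_Q(M) = \fm_Q^{d-1}$, so $b^Q_1(M) = \mu(\fm_Q^{d-1}) = \dim_{\kk}(\fm_Q^{d-1}/\fm_Q^d) = \dim_{\kk}(\fm_R^{d-1}/\fm_R^d) = d$, the last equality because $\operatorname{gr}_{\fm_R}R$ is a polynomial ring in two variables.

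To pin down the remaining Betti numbers without analyzing $\Omega^2_Q(M)$ by hand, I would feed $b^Q(M)$ back into Proposition~\ref{prop:one direction}. By the periodicity of free resolutions over a hypersurface ring (\cite{eisenbud-ci}, already used in the proof of Proposition~\ref{prop:moreDetailedHyper}), $b^Q_i(M)$ equals a fixed value $c$ for all $i \geq 2$; and by Proposition~\ref{prop:one direction} we may write $b^Q(M) = a\rhofree + b\rho_0 + c\tau^d_1$ with $a,b,c \in \QQ_{\geq 0}$. Comparing the entries in positions $0$ and $1$ gives $a + b + \tfrac{c}{d} = 1$ and $b + c = d$; eliminating $b$ yields $a = 1 - d + \tfrac{d-1}{d}c$, and now $b \geq 0$ forces $c \leq d$ while $a \geq 0$ forces $c \geq d$ (here I use $d \geq 2$). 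Hence $c = d$, $a = b = 0$, and $b^Q(M) = d\,\tau^d_1$, so $\tau^d_1 \in \BQ(Q)$. Assembling the three rays gives $\overline{\BQ(Q)} = C$, a $(n+1)$-dimensional cone, which is exactly the content of Conjecture~\ref{conj:hyper:equiv}.

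The main obstacle is realizing the ray $\tau^d_1$. Because the hypersurface is fixed, one cannot invoke the ``$\operatorname{ord}(f)$ large'' mechanism behind Lemma~\ref{lem:phiexact}, so the standard construction only yields $b^Q(M) \leq \Phi(b^R(M)) = \Phi(1,d,d-1,0,\dots) = (1,d,d,d,\dots)$ as an a priori upper bound, not an equality. The argument above sidesteps this by combining only the cheap invariants $b^Q_0(M)$ and $b^Q_1(M)$ with the containment already established in Proposition~\ref{prop:one direction}. Alternatively, one could check directly that the Eisenbud--Shamash homotopies $s_k$ for $M$ can be chosen with entries in $\fm_R$ --- they vanish for $k \geq 2$ since the minimal $R$-resolution $0 \to R^{d-1} \to R^d \to R \to R/\fm_R^{d-1} \to 0$ has length $2$, and for $k=1$ the needed image containments hold precisely because $\operatorname{ord}(f) = d$ matches the orders of the entries of that resolution --- so that the standard construction is already minimal and computes $b^Q(M) = (1,d,d,d,\dots)$ on the nose; but the linear-algebra route is shorter.
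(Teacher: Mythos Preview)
Your proof is correct and uses the same key module $M = Q/\fm_Q^{d-1}$ as the paper, but the argument differs in two pleasant ways. First, you observe that for $n=2$ the ray $\tau^d_0$ is redundant, since $\tau^d_0 = \tau^d_1 + \tfrac{d-2}{d}\rhofree$; the paper instead realizes $\tau^d_0$ directly as the Betti sequence of the canonical module $\omega_{Q'}$. Second, where the paper invokes a ``direct computation'' to verify $b^Q(M) = d\,\tau^d_1$, you compute only $b^Q_0(M)$ and $b^Q_1(M)$ and then squeeze the stable Betti number $c$ between the inequalities $c\leq d$ and $c\geq d$ coming from the nonnegativity of the coefficients in the simplicial decomposition $b^Q(M) = a\rhofree + b\rho_0 + c\,\tau^d_1$ guaranteed by Proposition~\ref{prop:one direction}. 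This bootstrap avoids any explicit analysis of the higher syzygies, at the cost of depending on Proposition~\ref{prop:one direction}; the paper's route is more self-contained but leaves the actual resolution computation to the reader.
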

\begin{proof} 
 By Proposition~\ref{prop:one direction}, it suffices to show that the
  desired extremal rays lie in $\overline{\BQ(Q)}$.  We may quickly
  reduce to showing that $\tau_0^d, \tau_1^d\in \overline{\BQ(Q)}$.
 Let $\fm_Q$ denote the maximal ideal of $Q$, $Q':=Q/\fm_Q^{d-1}$, and
$\omega_{Q'}$ be its canonical module. A direct computation confirms that
  $
  d \tau_1^d=b^Q(Q')$ and $
  d\tau_0^d=b^Q\left( \omega_{Q'}\right).
  $
%
\end{proof}

\begin{remark}[Codimension 2 complete intersections]\label{rmk:codim2}
  For arbitrary quotient rings $Q$ of a regular local ring $R$, the
  cone of Betti sequences $\BQ(Q)$ need not be finite dimensional.  For
  instance, consider $Q=\QQ[[x,y]]/\<f_1,f_2\>$ for any regular
  sequence $f_1, f_2$ inside $\<x,y\>^2$.  Let $\mathbf{T}_\bullet$ be the
  Tate resolution of the residue field of $Q$. Since $Q$ is Gorenstein, and
  hence self-injective, we may construct a doubly infinite acyclic
  complex $\mathbf{F}_\bullet$ as below:
\[
\mathbf{F}_\bullet\colon \quad 
\cdots \longleftarrow \mathbf{T}_1^* \longleftarrow \mathbf{T}_0^* 
\longleftarrow \mathbf{T}_0 \longleftarrow 
\mathbf{T}_1 \longleftarrow \mathbf{T}_2 \longleftarrow \cdots.
\smallskip
\]
For all $i\geq 0$, let $M_i$ be the kernel of $\mathbf{T}^*_i\to
\mathbf{T}^*_{i+1}$, and set $\tau_i:=b^Q(M_i)$. The $\tau_i$ are
linearly independent since $\rank \mathbf{T}_i=i+1$ for all $i$
(see~\cite{avramov-buchweitz}*{Example~4.2} for details). So we see
that $\BQ(Q)$ is infinite dimensional.  In particular, $\BQ(Q)$ is
spanned by infinitely many extremal rays.
\end{remark}

\begin{bibdiv}
\begin{biblist}


\bib{avramov-notes}{article}{
   author={Avramov, Luchezar L.},
   title={Infinite free resolutions},
   conference={
      title={Six lectures on commutative algebra},
   },
   book={
      series={Mod. Birkh\"auser Class.},
      publisher={Birkh\"auser Verlag},
      place={Basel},
   },
   date={2010},
   pages={1--118},
}

\bib{avramov-buchweitz}{article}{
   author={Avramov, Luchezar L.},
   author={Buchweitz, Ragnar-Olaf},
   title={Homological algebra modulo a regular sequence with special
   attention to codimension two},
   journal={J. Algebra},
   volume={230},
   date={2000},
   number={1},
   pages={24--67},
}

%

\bib{BEKStensor}{article}{
      author={Berkesch, Christine},
      author={Erman, Dan},
      author={Kummini, Manoj},
      author={Sam, Steven~V},
      title={Tensor complexes: Multilinear free resolutions
        constructed from higher tensors},
      note={\tt arXiv:1101.4604},
      date={2011},
}

\bib{boij-sod1}{article}{
    AUTHOR = {Boij, Mats},
    AUTHOR = {S{\"o}derberg, Jonas},
    TITLE = {Graded {B}etti numbers of {C}ohen--{M}acaulay modules and the
              multiplicity conjecture},
    JOURNAL = {J. Lond. Math. Soc. (2)},
    VOLUME = {78},
    YEAR = {2008},
    NUMBER = {1},
    PAGES = {85--106},
}

\bib{boij-sod2}{article}{
      author={Boij, Mats},
      author={S{\"o}derberg, Jonas},
       title={Betti numbers of graded modules and the multiplicity
       conjecture in the non-{C}ohen--{M}acaulay case},
     journal={Algebra Number Theory (to appear)},
        year={2008},
        note={\tt arXiv:0803.1645v1},
}

\bib{bruns-vetter}{book}{
   author={Bruns, Winfried},
   author={Vetter, Udo},
   title={Determinantal rings},
   series={Lecture Notes in Mathematics},
   volume={1327},
   publisher={Springer-Verlag},
   place={Berlin},
   date={1988},
   pages={viii+236},
}

\bib{buchs-eis-gor}{article}{
    AUTHOR = {Buchsbaum, David A.},
    AUTHOR = {Eisenbud, David},
     TITLE = {Algebra structures for finite free resolutions, and some
              structure theorems for ideals of codimension {$3$}},
   JOURNAL = {Amer. J. Math.},
    VOLUME = {99},
      YEAR = {1977},
    NUMBER = {3},
     PAGES = {447--485},
}

\bib{carlsson2}{article}{
   author={Carlsson, Gunnar},
   title={On the rank of abelian groups acting freely on $(S^{n})^{k}$},
   journal={Invent. Math.},
   volume={69},
   date={1982},
   number={3},
   pages={393--400},
}

\bib{carlsson}{article}{
   AUTHOR = {Carlsson, Gunnar},
     TITLE = {Free {$({\bf Z}/2)\sp k$}-actions and a problem in commutative
              algebra},
 BOOKTITLE = {Transformation groups, {P}ozna\'n 1985},
    SERIES = {Lecture Notes in Math.},
    VOLUME = {1217},
     PAGES = {79--83},
 PUBLISHER = {Springer},
   ADDRESS = {Berlin},
      YEAR = {1986},
}

\bib{triangulations}{book}{
   author={De~Loera, Jes{\'u}s A.},
   author={Rambau, J{\"o}rg},
   author={Santos, Francisco},
   title={Triangulations},
   series={Algorithms and Computation in Mathematics},
   volume={25},
   note={Structures for algorithms and applications},
   publisher={Springer-Verlag},
   place={Berlin},
   date={2010},
   pages={xiv+535},
}

\bib{eisenbud-ci}{article}{
   author={Eisenbud, David},
   title={Homological algebra on a complete intersection, with an
   application to group representations},
   journal={Trans. Amer. Math. Soc.},
   volume={260},
   date={1980},
   number={1},
   pages={35--64},
}

\bib{efw}{article}{
  author={Eisenbud, David},
  author={Fl\o ystad, Gunnar},
  author={Weyman, Jerzy},
  title={The existence of pure free resolutions},
  journal={Ann. Inst. Fourier (Grenoble)},
  volume={61},
  date={2011},
  number={3},
  pages={905\ndash 926}}

\bib{ES-JAMS}{article}{
      author={Eisenbud, David},
      author={Schreyer, Frank-Olaf},
       title={Betti numbers of graded modules and cohomology of vector
  bundles},
        date={2009},
     journal={J. Amer. Math. Soc.},
      volume={22},
      number={3},
       pages={859\ndash 888},
}

\bib{ES:ICMsurvey}{inproceedings}{
      author={Eisenbud, David},
      author={Schreyer, Frank-Olaf},
       title={Betti numbers of syzygies and cohomology of coherent
       sheaves},
        date={2010},
   booktitle={Proceedings of the {I}nternational {C}ongress of
  {M}athematicians},
        note={Hyderabad, India},
}

\bib{hartshorne-vector}{article}{
    AUTHOR = {Hartshorne, Robin},
     TITLE = {Algebraic vector bundles on projective spaces: a problem list},
   JOURNAL = {Topology},
    VOLUME = {18},
      YEAR = {1979},
    NUMBER = {2},
     PAGES = {117--128},
}

\bib{HerzogKuhlPure84}{article}{
      author={Herzog, J.},
      author={K{\"u}hl, M.},
       title={On the {B}etti numbers of finite pure and linear resolutions},
        date={1984},
     journal={Comm. Algebra},
      volume={12},
      number={13-14},
       pages={1627\ndash 1646},
}


\bib{M2}{misc}{
    label={M2},
    author={Grayson, Daniel~R.},
    author={Stillman, Michael~E.},
    title = {Macaulay 2, a software system for research
	    in algebraic geometry},
    note = {Available at \url{http://www.math.uiuc.edu/Macaulay2/}},
}
\bib{peeva-stillman}{article}{
   author={Peeva, Irena},
   author={Stillman, Mike},
   title={Open problems on syzygies and Hilbert functions},
   journal={J. Commut. Algebra},
   volume={1},
   date={2009},
   number={1},
   pages={159--195},
}


\bib{shamash}{article}{
   author={Shamash, Jack},
   title={The Poincar\'e series of a local ring},
   journal={J. Algebra},
   volume={12},
   date={1969},
   pages={453--470},
}


\end{biblist}
\end{bibdiv}

\end{document}